\pgfplotsset{compat=1.15}
\numberwithin{equation}{section}
\newtheorem{thm}{Theorem}[section]
\newtheorem{prop}[thm]{Proposition}
\theoremstyle{definition}
\newtheorem{definition}[thm]{Definition}
\title{Reconstruction of Univariate Functions from Directional Persistence Diagrams}
\author{Aina Ferr\`a$\,{}^{1,\,2}$, Carles Casacuberta$\,{}^{1,\,2}$, Oriol Pujol$\,{}^{1,\,2}$}
\date{}
\begin{document}

\footnotetext[1]{Departament de Matem\`atiques i Inform\`atica, Universitat de Barcelona (UB), Gran Via de les Corts Catalanes, 585, 08007 Barcelona, Spain, \{aina.ferra.marcus, carles.casacuberta, oriol\_pujol\} @ub.edu}

\footnotetext[2]{Supported by MCIN/AEI/10.13039/501100011033 under grant PRE2020-094372 (A.\,Ferr\`a) and projects PID2019-105093GB-I00 (A.\,Ferr\`a, O.\,Pujol) and PID2020-117971GB-C22 (C.\,Casacuberta)}

\unmarkedfntext{\emph{Keywords:} Persistent homology transform, persistence diagram, data reconstruction algorithm}

\unmarkedfntext{\emph{MSC classes:} 55N31, 62R40, 68T07}

\maketitle

\begin{abstract}
We describe a method for approximating a single-variable function $f$ using persistence diagrams of sublevel sets of $f$ from height functions in different directions. We provide algorithms for the piecewise linear case and for the smooth case. 
Three directions suffice to locate all local maxima and minima of a piecewise linear continuous function from its collection of directional persistence diagrams, while five directions are needed in the case of smooth functions with non-degenerate critical points.

Our approximation of functions by means of persistence diagrams is motivated by a study of importance attribution in machine learning, where one seeks to reduce the number of critical points of signals without a significant loss of information for a neural network classifier. 
\end{abstract}

\section*{Introduction}

For a finite geometric simplicial complex $M$ in Euclidean space $\mathbb{R}^d$ with $d\ge 2$, the \emph{persistent homology transform} (PHT), defined in \cite{turneretal}, is a function on the sphere $S^{d-1}$ that associates to each unit vector $v$ the persistence diagram of sublevel sets of $M$ in the direction~$v$. More precisely, one defines
\[
M_t(v)=\{x\in M\mid x\cdot v\le t\}
\]
for $t\in\mathbb{R}$, where the dot denotes scalar product and persistent homology generators of the filtered space $M(v)$ are computed until dimension~$d-1$. The corresponding persistence module has the singular homology $H_*(M_t(v);\mathbb{R})$ at level~$t$, together with translation operators induced by inclusions of sublevel sets. In this paper, we use coefficients in $\mathbb{R}$ and we only focus on zero-dimensional homology $H_0$. The PHT is continuous with respect to any Wasserstein distance on the set of persistence diagrams \cite{curryetal2021}.

It was proved in \cite[Theorem~3.1]{turneretal} that the PHT is injective if $d=2$ or $d=3$, and this result was extended over all dimensions in \cite{curryetal2021} and \cite{ghristetal2018}. That is, for finite simplicial complexes embedded in~$\mathbb{R}^d$, the collection of persistence diagrams of sublevel sets in all possible directions $v\in S^{d-1}$ uniquely determines the given geometric simplicial complex.
See \cite{Solomonetal2018} for extensive information about inverse problems in homological persistence.

In order to produce efficient reconstruction algorithms, one seeks to use a small number of directions to reconstruct any given simplicial complex under suitable general position assumptions. In the case of graphs, three directions suffice, as shown in~\cite{beltonetal}. Bounds on the number of directions needed for reconstruction of compact definable sets in $\mathbb{R}^d$ can be found in \cite{curryetal2021}.

\enlargethispage{0.5cm}

Given a continuous piecewise linear function $f\colon [a,b]\to\mathbb{R}$ with finitely many vertices, the graph
$G(f)=\{(x,y)\in\mathbb{R}^2\mid y=f(x)\}$
can be viewed as a finite geometric simplicial complex of dimension~$1$.
Our goal is to determine the precise location of the local maxima and minima of $G(f)$ assuming knowledge of a persistence diagram for each direction $v\in S^1$. We label a direction $v$ with the angle $\theta$ that the vector $v$ forms with the positive direction of the $x$-axis, and consider only the upper open hemisphere of $S^1$, so that $0<\theta<\pi$. Our algorithm requires precisely three \emph{admissible} directions, in a sense that we make precise.

We also present an algorithm that locates the local maxima and minima of a \emph{smooth} function $f\colon [a,b]\to\mathbb{R}$,
and prove that the algorithm converges to each critical point of $f$ 
assuming that the second derivative $f''$ does not vanish at~critical points.
Although this assumption can be weakened, it simplifies our presentation of results and is sufficiently plausible in practice ---it excludes the occurrence of intervals in which the graph of $f$ is horizontal.

The injectivity of the PHT for graphs of smooth functions on a bounded domain follows from results in~\cite{horwitz}. 
Indeed, the problem of reconstructing a smooth function from the set of its directional persistence diagrams is essentially equivalent to the problem of recovering a smooth function from the set of its tangent lines.
From \cite{horwitz} we quote the fact that, given a sequence of distinct tangent lines $T_j$ converging to a tangent line $T$,
the sequence of intersection points of $T_j$ with $T$ converges to a point where $T$ is tangent to the graph of~$f$.
Our assumption on the second derivative $f''$ at critical points replaces the assumption made in \cite{horwitz} that $f''$ has only finitely many zeros.

Both in the piecewise linear case and in the smooth case, our algorithm produces a piecewise linear approximation of the given function with the same set of critical points.

Motivation for this work came from a study of explainability in the process of classification of signals by a neural network. This study is summarized in Section~\ref{section4} and further details are given in~\cite{ACO2}.
The possibility of reversing information obtained from persistence landscapes in order to partially reconstruct data allowed us to design a system of importance attribution in signal classification tasks. 
Therefore, our method contributes to interpretability in machine learning.

\section{Critical points and critical lines}

A \emph{critical point} of a continuous function $f\colon [a,b]\to \mathbb{R}$ will mean either $(a,f(a))$ or $(b,f(b))$ or any of the local maxima or local minima $(x_i,f(x_i))$ of $f$ in the interior of $[a,b]$. Inflection points are not treated as critical points in this article. We assume that $f$ has only finitely many critical points; in particular, horizontal segments in the graph of $f$ are excluded.
If $f$ is smooth, then for every critical point $(x_i,f(x_i))$ with $a<x_i<b$ the derivative $f'(x_i)$ vanishes, and we additionally impose that $f''(x_i)\ne 0$ at each critical point.

We denote by $S^1$ the unit circle and view each $v\in S^1$ as a complex number $e^{i\theta}$.
Given any subset $X$ of $\mathbb{R}^{2}$, the \emph{sublevel set} of $X$ in the direction $v=e^{i\theta}$ is defined for $t\in{\mathbb R}$ 
and $0<\theta<\pi$ as
\[ X_t(v) = \{(x,y) \in X \mid x\cos\theta+y\sin\theta \leq t\}. \]

That is, $X_t(v)$ is the subset of $X$ consisting of those points that are placed on or below the line $x\cos\theta+y\sin\theta=t$ along the direction~$v$; see Fig.\;\ref{fig1}. Sublevel sets are nested, since $X_s(v)\subseteq X_t(v)$ if $s\le t$, and we denote by $X(v)$ the filtered space with $X_t(v)$ at height~$t$.

\begin{figure}[htb]
\centering
    \begin{tikzpicture}
    \draw[black] (-2, 0) -- (5, 0);
    \draw[black] (0, -2) -- (0, 5);
    \draw[cyan] (0,0) circle (1);
    \node[inner sep= 0pt, outer sep=0] (H) at (-0.5, 2) {};
    \node[inner sep= 0pt, outer sep=0] (I) at (-0.09, 2.82) {};
    \node[inner sep= 0pt, outer sep=0] (E) at (0.5, 4) {};
    \node[inner sep= 0pt, outer sep=0] (J) at (1.36, 1.98) {};
    \node[inner sep= 0pt, outer sep=0] (F) at (2, 0.5) {};
    \node[inner sep= 0pt, outer sep=0] (K) at (2.71, 1.21) {};
    \node[inner sep= 0pt, outer sep=0] (G) at (4, 2.5) {};
    \node[fill, inner sep=1, red, label={[red] right: $v$}] (C) at (0.5, 0.87) {};
    \draw[blue] (-0.5, 2) -- (0.5, 4) -- (2, 0.5) -- (4, 2.5);
    \draw[red, opacity=0.5, line width=2] (-0.5, 2) -- (-0.09, 2.82);
    \draw[red, opacity=0.5, line width=2] (1.36, 1.98) -- (2, 0.5);
    \draw[red, opacity=0.5, line width=2] (2, 0.5) -- (2.71, 1.21);
    \draw[domain=-2:4, variable=\x, red] plot (\x, -0.58*\x + 2.77);
    \draw[domain=-1:3, variable=\x, red] plot (\x, 1.73*\x);
    \node[label={[red] left: \footnotesize 
    $x\cos\theta+y\sin\theta=t$
    }] at(5.25, 0.2) {};
    \node[label={[red]above left: \footnotesize $ y = x\tan\theta$}] at(2.9,4.5) {};
    \node[label={[red]left: $ t$}] at(1.05, 1.7) {};
    \node[label={[blue]left: $ X$}] at(4.3, 2) {};
    \node[label={[red]left: $ X_t(v)$}] at(-0.3, 2.5) {};
    \node[label={[red]left: $ \theta$}] at(0.7, 0.22) {};
    \end{tikzpicture}
    \caption{A sublevel set of a graph $X$ at a height $t$ along a direction~$v$.}
    \label{fig1}
\end{figure}
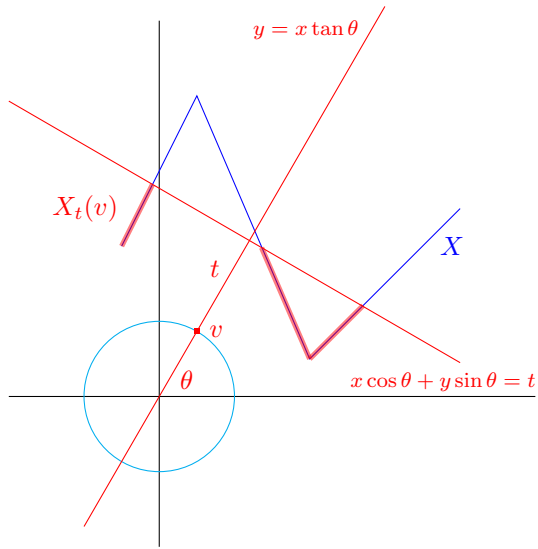

Let $f\colon [a,b]\to {\mathbb R}$ be a continuous function and let $v\in S^1$. We define the \emph{directional persistence module} $M(f,v)$ of $f$ in the direction $v$ as
\[
M_t(f,v)=H_0(G(f)_t(v);{\mathbb R})
\]
for $t\in\mathbb{R}$,
where $G(f)$ is the graph of $f$ with the topology induced by ${\mathbb R}^2$ and $H_0$ is zero-dimensional homology.
Hence $M_t(f,v)$ counts the connected components, for each value of~$t$, of the sublevel sets of the graph of $f$ in the direction~$v$. The translation operators $M_s(f,v)\to M_t(f,v)$ for $s\le t$ are induced by inclusions of sublevel sets; see \cite{CSGO2016} for details about persistence modules.

\begin{definition}
A \emph{critical line} at a height $t$ for a direction $v=e^{i\theta}$ with $0<\theta<\pi$ is a line in $\mathbb{R}^2$ orthogonal to $v$ (hence of slope $-1/\tan\theta$) such that the filtered space $G(f)(v)$ changes its number of connected components at height~$t$, where height $0$ corresponds to the line passing through $(0,0)$. 
\end{definition}

Lines passing through $(a,f(a))$ or $(b,f(b))$ may not be critical. In fact, for the vertical height function these lines will be critical if and only if $(a,f(a))$ respectively $(b,f(b))$ are local minima.

In the smooth case, critical lines are tangent to the graph of~$f$, except perhaps those passing through $(a,f(a))$ or $(b,f(b))$. 
In the piecewise linear case, each critical line in any direction contains at least one critical point of~$f$. 

Each directional persistence module $M(f,v)$ for a continuous function $f\colon [a,b]\to\mathbb{R}$ gives rise to a \emph{barcode} and a \emph{persistence diagram} as in \cite{EH2008,ghrist2008}. When two connected components of $G(f)(v)$ merge at some height, the component which was born more recently (i.e., at a greater height) is annihilated, and if several components were born at the same height then the one with smaller $x$-value is kept. This convention yields a \emph{barcode template} for homological dimension~$0$ in the sense of \cite[Definition~4.3]{LOT}. Hence, a point $(\beta,\delta)$ in a directional persistence diagram represents a connected component of $G(f)(v)$ which is born at a height $\beta$ and merges with an older component at a height~$\delta$. The possibility that $\delta=\infty$ is not discarded; in fact, since the graph of $f$ is connected, each directional persistence diagram contains precisely one point $(\beta,\infty)$, where $\beta$ is the birth height of the whole graph.

A persistence diagram in a direction $v$ tells us precisely which lines orthogonal to $v$ are critical.

\begin{prop}
\label{first}
For a direction $v$, a line orthogonal to $v$ at a height $t$ is critical for a function $f$ if and only if either $t=\beta$ for some point $(\beta,\delta)$ in the persistence diagram for zero-homology $H_0$ of sublevel sets of $f$ in the direction~$v$,
or $t=\delta$ when $\delta$ is finite.
\end{prop}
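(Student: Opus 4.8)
The plan is to obtain the statement by unwinding the definitions of critical line and of persistence diagram, via the interval decomposition of the directional module $M(f,v)$. First I would record that, by the standing finiteness assumptions on $f$ in this section, the filtered space $G(f)(v)$ changes its topology at only finitely many heights, so $M(f,v)$ decomposes, up to isomorphism and reordering, as a finite direct sum $\bigoplus_j \mathbb{R}_{[\beta_j,\delta_j)}$ of interval modules, indexed (with multiplicity) by the points $(\beta_j,\delta_j)$ of the persistence diagram, where $\delta_j$ may be infinite; this is the structure theorem recalled in \cite{CSGO2016,EH2008,ghrist2008}. Then $\dim M_t(f,v)$, which by definition is the number of connected components of $G(f)_t(v)$, equals $\#\{\,j : \beta_j\le t<\delta_j\,\}$, a right-continuous step function of $t$.

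Next I would analyse this step function. Put $B=\{\beta_j\}\cup\{\delta_j : \delta_j<\infty\}$. If $t\notin B$, then for every $j$ the half-open interval $[\beta_j,\delta_j)$ either contains a whole neighbourhood of $t$ or is disjoint from one, so $\dim M_s(f,v)$ is constant for $s$ near $t$ and each translation operator $M_s(f,v)\to M_{s'}(f,v)$ between levels close to $t$ is an isomorphism; hence the line orthogonal to $v$ at height $t$ is not critical. Conversely, suppose $t\in B$. If $t=\beta_{j_0}$, then for small $\varepsilon>0$ the operator $M_{t-\varepsilon}(f,v)\to M_{t}(f,v)$ is not surjective, since the generator of the summand $\mathbb{R}_{[\beta_{j_0},\delta_{j_0})}$ lies outside its image; if $t=\delta_{j_0}<\infty$, then $M_{t-\varepsilon}(f,v)\to M_{t}(f,v)$ is not injective, since that summand has collapsed to zero. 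In either case the filtered space genuinely changes at height $t$, so the line is critical. I would also point out that the lines through $(a,f(a))$ and $(b,f(b))$ need no separate treatment: such a line is critical exactly when a component is born there, i.e.\ when the endpoint is a local minimum of the height function $x\mapsto x\cos\theta+f(x)\sin\theta$ on $[a,b]$, and this birth is recorded in the diagram as a point whose first coordinate is that very height.

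The step I expect to require the most care is the possibility that $\beta_j=\delta_k$ for some $j\ne k$, i.e.\ that a birth and a merge occur at the same height: there the Betti number $\dim M_t(f,v)$ need not change, even though $t$ occurs in the diagram both as a birth and as a death. I would resolve this by reading ``$G(f)(v)$ changes its number of connected components at height $t$'' as the assertion that the translation operator $M_{t-\varepsilon}(f,v)\to M_{t}(f,v)$ fails to be an isomorphism for all sufficiently small $\varepsilon>0$ --- which is the property isolated above and is the intended meaning --- so that the equivalence holds without exception; alternatively, one observes that such coincidences do not occur for the admissible directions on which the reconstruction algorithm is actually run.
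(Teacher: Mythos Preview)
Your proposal is correct and follows essentially the same idea as the paper's proof---births and finite deaths in the persistence diagram are precisely the heights at which the number of connected components of the sublevel filtration changes---but you formalize it through the interval decomposition of $M(f,v)$ and the behaviour of the translation operators, whereas the paper argues directly and informally from the definitions in a few lines. Your extra care about the degenerate case $\beta_j=\delta_k$ (where the Betti number might remain constant while the filtration still changes) is a genuine refinement that the paper's proof glosses over; your resolution via non-isomorphism of the translation map is the right way to read the definition, and the alternative observation that such coincidences are avoided for admissible directions is also apt.
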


\begin{proof}
If there is a point $(\beta,\delta)$ with $\delta$ finite in the persistence diagram, then at least one new connected component of $G(f)(v)$ arises at height $\beta$ (so the orthogonal line at height $\beta$ is critical), and at least two connected components merge at height $\delta$, which implies that the orthogonal line at height $\delta$ is also critical. If $\delta=\infty$, then only the line at height $\beta$ is critical.
Conversely, if a line at height $t$ is critical, then either a new connected component arises or two connected components merge at~$t$.
\end{proof}

Persistence diagrams in this article have been calculated using GUDHI (Geometry Understanding in Higher Dimensions \cite{gudhi:urm}), a generic open source library with a Python interface.

\section{Reconstruction algorithms}

\subsection{The piecewise linear case}

In \cite{Fasyetal2019} an algorithm was given for reconstructing an embedded simplicial complex of arbitrary dimension using a finite number of directional persistence diagrams.
In our case, three directional persistence diagrams suffice to reconstruct a continuous piecewise linear function $f$ by providing 
an approximation that coincides with $f$ on its critical points.

Assuming that $f$ has a total number of $n$ critical points, the number of critical lines in each direction is less than or equal to $n$. A critical line can pass through two or more critical points, and a line passing through a critical point can fail to be critical. 
Indeed, if the graph of $f$ crosses transversally 
a line at some critical point~$P$, then the corresponding directional persistence diagram will not include any information relative to~$P$, since there is no change in the number of connected components of the sublevel sets $G(f)_t(v)$ when the line passes through~$P$.
Since we need to exclude this possibility from our algorithm, we give the following definition.

\begin{definition}
For a continuous piecewise linear function $f\colon [a,b]\to {\mathbb R}$, we say that a direction $v=e^{i\theta}\in S^1$ with $0< \theta< \pi$ is \emph{admissible} if 
for each critical point $P=(p_1,p_2)$ of $f$ 
there is an $\varepsilon>0$ such that all the points $(x,f(x))$ on the graph of $f$ with $p_1-\varepsilon<x<p_1+\varepsilon$ and $x\ne p_1$ are on the same open half-plane in $\mathbb{R}^2\smallsetminus L$ where $L$ is the line orthogonal to $v$ containing~$P$.
\end{definition}

If $v$ is admissible then all lines orthogonal to $v$ containing critical points of $f$ are critical lines, except perhaps those at the endpoints $a$ and~$b$.

\begin{figure}[htb]
\hspace{0.5cm}
 \begin{subfigure}[b]{0.45\textwidth}
 \hspace{0.8cm}
    \begin{tikzpicture}[scale=0.9]
    \draw[black] (-0.5, 0) -- (5, 0);
    \draw[black] (0, -0.5) -- (0, 5);
    
    \node[inner sep=1, color=blue, fill] at (0, 2.5) {};
    \node[inner sep=1, color=blue, fill] at (0.5, 4) {};
    \node[inner sep=1, color=blue, fill] at (2, 0.5) {};
    \node[inner sep=1, color=blue, fill] at (4, 2.5) {};
    
    \draw[blue] (0, 2.5) -- (0.5, 4) -- (2, 0.5) -- (4, 2.5);
    
     \draw[domain=-0.5:4, variable=\x, red] plot (\x, -0.578*\x + 1.656);
     \draw[domain=-0.5:5, variable=\x, red] plot (\x, -0.578*\x + 2.5);
     \draw[domain=-0.5:5, variable=\x, red] plot (\x, -0.578*\x + 4.29);
     \draw[domain=-0.5:5, variable=\x, red] plot (\x, -0.578*\x + 4.8121);
     \draw[domain=-0.5:3, variable=\x, black] plot (\x, 1.73*\x);
    \end{tikzpicture}
    \caption{An admissible direction.}
    \end{subfigure}
    \hspace{-0.2cm}
    \begin{subfigure}[b]{0.45\textwidth}
    \hspace{0.6cm}
    \begin{tikzpicture}[scale=0.9]
    \draw[black] (-0.5, 0) -- (5, 0);
    \draw[black] (0, -0.5) -- (0, 5);
    
    \node[inner sep=1, color=blue, fill] at (0, 2.5) {};
    \node[inner sep=1, color=blue, fill] at (0.5, 4) {};
    \node[inner sep=1, color=blue, fill] at (2, 0.5) {};
    \node[inner sep=1, color=blue, fill] at (4, 2.5) {};
    
    \draw[blue] (0, 2.5) -- (0.5, 4) -- (2, 0.5) -- (4, 2.5);
    
     \draw[domain=3.9:4.3, variable=\x, red] plot (\x,  -10*\x + 42.5);
     \draw[domain=1.7:2.08, variable=\x, red] plot (\x, -10*\x + 20.5);
     \draw[domain=0.4:0.95, variable=\x, red] plot (\x,-10*\x + 9);
     \draw[domain=-0.2:0.33, variable=\x, red] plot (\x, -10*\x + 2.5);
     \draw[domain=-0.5:5, variable=\x, black] plot (\x, 0.1*\x);
    \end{tikzpicture}
    \caption{A non-admissible direction.}
    \end{subfigure}
    \caption{A direction $v$ is not admissible if some line orthogonal to $v$ is transverse to the graph at some critical point.}
\end{figure}
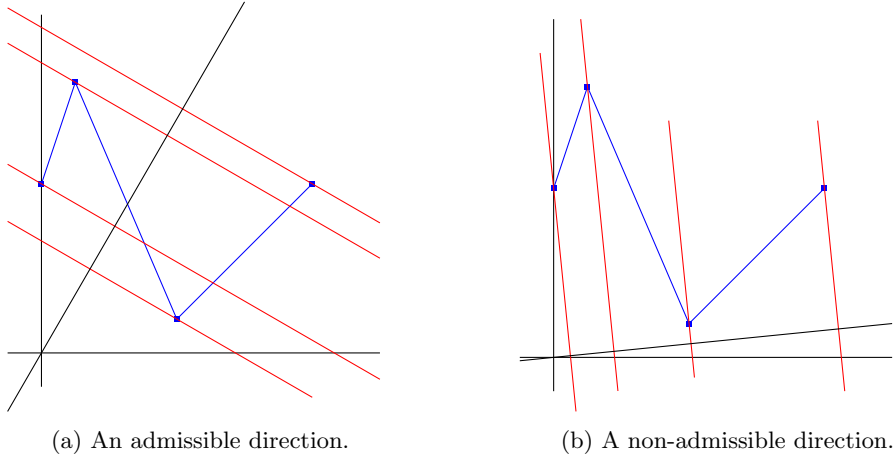

If there are critical points in which the graph of $f$ has very small slopes in absolute value, then the only admissible directions are those with $\theta$ close to~$\pi/2$. More precisely, the following observation guarantees the existence of admissible directions.

\begin{prop}
Let $m$ be the smallest slope in absolute value between two consecutive vertices in the graph of $f$. Then every direction $e^{i\theta}$ with $0<\theta<\pi/2$ and $\tan\theta>1/m$ is admissible.
\end{prop}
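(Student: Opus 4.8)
The plan is to reduce admissibility at a critical point to a one-variable sign condition and then read that sign off from the slopes of the two graph segments adjacent to the point. Write $\phi(x,y)=x\cos\theta+y\sin\theta$, so that for a direction $v=e^{i\theta}$ the line $L$ orthogonal to $v$ through a point $P=(p_1,p_2)$ is $\{\phi=\phi(P)\}$, and the two open half-planes of $\mathbb{R}^{2}\smallsetminus L$ are $\{\phi>\phi(P)\}$ and $\{\phi<\phi(P)\}$. Admissibility at $P$ then amounts to the statement that
\[
g(x)=\phi(x,f(x))-\phi(P)=(x-p_1)\cos\theta+(f(x)-p_2)\sin\theta
\]
has constant, in particular nonzero, sign on a punctured neighbourhood of $p_1$.

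First I would record what the hypothesis gives. Since $f$ has no horizontal segments, $m>0$; and $0<\theta<\pi/2$ gives $\cos\theta>0$ and $\sin\theta>0$, while $\tan\theta>1/m$ rewrites as $\cot\theta<m$, hence $\cot\theta<|m_i|$ for the slope $m_i$ of every segment of the graph. Then, fixing a critical point $P$, I would choose $\varepsilon>0$ small enough that on each of the intervals $(p_1-\varepsilon,p_1)$ and $(p_1,p_1+\varepsilon)$ meeting $[a,b]$ the graph is a single linear piece, of slope $m_\ell$ on the left and $m_r$ on the right. A direct substitution gives, for $x\in(p_1,p_1+\varepsilon)$,
\[
g(x)=(x-p_1)\sin\theta\,(\cot\theta+m_r),
\]
so $\operatorname{sign}g(x)=\operatorname{sign}(\cot\theta+m_r)$, and since $\cot\theta<|m_r|$ this equals $\operatorname{sign}(m_r)$. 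Symmetrically, for $x\in(p_1-\varepsilon,p_1)$ one gets $\operatorname{sign}g(x)=-\operatorname{sign}(\cot\theta+m_\ell)=-\operatorname{sign}(m_\ell)$.

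It then remains to combine the two one-sided computations. If $P$ is an endpoint $a$ or $b$, only one of the two intervals occurs and $g$ already has constant nonzero sign there. If $P$ is an interior critical point, it is either a strict local maximum, in which case $m_\ell>0>m_r$ and both one-sided signs of $g$ equal $-1$, or a strict local minimum, in which case $m_\ell<0<m_r$ and both equal $+1$. In either case $g$ has constant sign on the whole punctured neighbourhood, so all points $(x,f(x))$ with $0<|x-p_1|<\varepsilon$ lie in a single open half-plane bounded by $L$; as $P$ was arbitrary, $v$ is admissible.

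I expect the only real subtlety to be the sign bookkeeping in the last paragraph, together with making explicit the geometric meaning of the hypothesis: $\cot\theta<|m_i|$ says precisely that the (nearly horizontal) line $L$ is nowhere tangent to the graph and, at each interior critical point, has slope $-\cot\theta$ strictly between the slopes of the two adjacent segments, which is exactly what forbids the graph from crossing $L$ transversally there.
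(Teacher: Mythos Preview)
Your proof is correct and follows essentially the same approach as the paper: both reduce admissibility to the observation that the orthogonal line $L$ has slope $-\cot\theta$, whose absolute value $1/\tan\theta<m$ is strictly smaller than that of every graph segment, so the graph cannot cross $L$ at a critical point. The paper states this as a single geometric sentence, whereas you unpack it algebraically via the function $g(x)=\phi(x,f(x))-\phi(P)$ and the explicit sign analysis on each adjacent segment; this adds rigor but no new ideas.
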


\begin{proof}
Since, by assumption, the graph of $f$ does not contain any horizontal segments, we have that $m>0$. Given $v=e^{i\theta}$ with $0<\theta<\pi/2$, the lines orthogonal to $v$ have slope $-1/\tan\theta$. If one such line $L$ contains a critical point $P$, then the admissibility condition is fulfilled at $P$ since the slope of $L$ is smaller in absolute value than the slope of each segment of the graph of $f$ at~$P$.
\end{proof}

The intersection points of two distinct critical lines will be called \emph{double points} and the intersection points of three distinct critical lines will be called \emph{triple points}.

In what follows we fix three directions $v_0=e^{i\theta_0}$, $v_1=e^{i\theta_1}$, $v_2=e^{i\theta_2}$ with 
$0<\theta_2<\theta_1<\theta_0\le\pi/2$,
and describe a procedure to recover a continuous piecewise linear function $f\colon [a,b]\to {\mathbb R}$ from the corresponding directional persistence diagrams. 
Although the algorithm accepts as input any values of $\theta_0$, $\theta_1$ and $\theta_2$, we normally use
$\theta_0=90^\circ$, $\theta_1=85^\circ$and $\theta_2=80^\circ$ (angles given in degrees).
If some of these happens to be non-admissible, then either the angle should be increased or the reconstruction will omit the conflicting critical points.

\begin{thm}
\label{TheTheorem}
Let $f\colon [a,b]\to {\mathbb R}$ be a continuous piecewise linear function, and let $v_0=e^{i\theta_0}$, $v_1=e^{i\theta_1}$ and $v_2=e^{i\theta_2}$ be admissible directions such that $0<\theta_2<\theta_1<\theta_0\le \pi/2$.
Let $\mathcal P$ be the set of triple points determined by the three directional persistence diagrams. 
Suppose that no critical line orthogonal to $v_0$, $v_1$ or $v_2$ contains two or more points from $\mathcal P$.
Then $\mathcal P$ is equal to the set of critical points of~$f$, excluding the boundary points $(a,f(a))$ and $(b,f(b))$ if these are local maxima.
\end{thm}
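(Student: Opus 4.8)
My plan is to prove the asserted set equality by establishing two inclusions. For a point $P\in\mathbb{R}^2$ and $j\in\{0,1,2\}$ let $L_j(P)$ denote the line through $P$ orthogonal to $v_j$; since $0<\theta_2<\theta_1<\theta_0\le\pi/2$ the three slopes $-1/\tan\theta_j$ are pairwise distinct, so $L_0(P),L_1(P),L_2(P)$ are three distinct lines whose only common point is $P$. Thus $P$ belongs to $\mathcal P$ as soon as each $L_j(P)$ is a critical line for $v_j$, because then $P$ is an intersection of three distinct critical lines, one from each directional persistence diagram.

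First I would show that every critical point $P$ of $f$ which is not a boundary local maximum lies in $\mathcal P$. If $P$ is an interior local maximum or minimum, then by admissibility of $v_j$ together with the observation following the definition of admissibility, $L_j(P)$ is a critical line for each $j$, so $P$ is a triple point. If $P=(a,f(a))$ or $P=(b,f(b))$ is a boundary local minimum, I would instead inspect the filtration $G(f)(v_j)$ directly near $P$: using $\theta_j\le\pi/2$ one checks that the height function $x\cos\theta_j+y\sin\theta_j$ restricted to the graph has a strict local minimum at $P$, hence a new connected component of $G(f)_t(v_j)$ is born as $t$ reaches the height of $P$, so again $L_j(P)$ is critical for every $j$ and $P\in\mathcal P$. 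Finally, if $P$ is a boundary local maximum, the analogous inspection (most transparent when $\theta_0=\pi/2$, where the height function is simply $y$) shows that the number of components of $G(f)(v_0)$ does not change as the sweeping line passes through $P$, so $L_0(P)$ is not critical and $P$ is not a triple point; this accounts for the exclusion in the statement.

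Next I would prove the reverse inclusion: every $Q\in\mathcal P$ is a critical point of $f$. Write $Q$ as the intersection of critical lines $\ell_0,\ell_1,\ell_2$ with $\ell_j$ orthogonal to $v_j$. By the fact recalled before the statement, $\ell_0$ contains at least one critical point $P_0$ of $f$, and by the previous paragraph $P_0\in\mathcal P$ (it must be an interior extremum or a boundary local minimum, since a boundary local maximum does not lie on any critical line orthogonal to $v_0$). If $Q\neq P_0$, then $\ell_0$ is a critical line containing the two distinct points $Q,P_0$ of $\mathcal P$, contradicting the hypothesis that no critical line orthogonal to $v_0$, $v_1$ or $v_2$ carries two or more points of $\mathcal P$. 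Hence $Q=P_0$ is a critical point of $f$; moreover $Q$ is not a boundary local maximum, again because such points were shown in the previous paragraph not to be triple points. Together the two inclusions give $\mathcal P=\{\text{critical points of }f\}\setminus\{\text{boundary local maxima}\}$.

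The step I expect to be the main obstacle is the endpoint bookkeeping in the first inclusion. Interior critical points are controlled by admissibility, but the behaviour at $(a,f(a))$ and $(b,f(b))$ is not, so one must check by hand, for each of the four configurations (left or right endpoint, local maximum or local minimum), whether sweeping a line orthogonal to $v_j$ through the point changes the number of connected components of the sublevel sets; this is where the hypothesis $\theta_j\le\pi/2$ is genuinely used, and it is also what guarantees, in the second inclusion, that the critical point $P_0$ produced on $\ell_0$ is one already placed in $\mathcal P$ rather than an excluded boundary local maximum.
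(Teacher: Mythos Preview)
Your proof follows the same two-inclusion strategy as the paper's, and your reverse inclusion is in fact a slight streamlining: the paper notes that \emph{each} of the three critical lines through a non-critical triple point would carry a second triple point, whereas you only need $\ell_0$ and the hypothesis once. One small wording issue: the parenthetical claim that ``a boundary local maximum does not lie on any critical line orthogonal to $v_0$'' is too strong as stated---such a point could accidentally sit on a line that is critical because of some \emph{other} critical point---but this is harmless, since in that case you may take $P_0$ to be that other (interior) critical point, which lies in $\mathcal P$ by the first inclusion.
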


\begin{proof}
Since the three directions are admissible, every critical point of $f$ (except perhaps the boundary points) is a triple point, since there is a critical line in each direction passing through it.

Conversely, every triple point must be a critical point of~$f$, since if a triple point $P$ is not critical, then each of the three critical lines passing through $P$ contains at least two triple points. 
\end{proof}

Consequently, by joining each pair of consecutive points in $\mathcal P$ (ordered by their $x$-coordinate) we obtain a continuous piecewise linear function with the same critical points as~$f$. Since the boundary points $(a,f(a))$ and $(b,f(b))$ remain undetected unless they are local minima,
in order to entirely reconstruct the graph of $f$ it will be necessary to give its boundary points as part of the oracle's data.

The assumption that no critical line contains two or more triple points does not restrict significantly the validity of our algorithm, since the probability that it fails is negligible. 
Indeed, as in \cite[\S\,5.1]{LOT}, the subset of $S^1$ of those directions for which there are critical lines passing through two or more triple points has measure zero.

Thus the algorithm consists of finding triple intersections within the set of critical lines determined by three chosen directional persistence diagrams. There is a naive algorithm consisting of looking at all the possible intersections between all such critical lines, that would take $O(n^3)$ where $n$ is the number of critical points of the original function. However, since there is a strong geometrical component, we can improve performance by lowering the complexity to at most $2n\log n+2n^2$ operations, hence~$O(n^2)$, as follows.

\begin{figure}[htb]
    \centering
    \begin{tikzpicture}[scale=0.95]
    \begin{axis}[every axis plot post/.append style={
      mark=none,domain=-6:4,samples=95,smooth}, 
      axis x line*=middle, 
      axis y line*=middle, 
      ymin=-2,
      ymax=4.5,
      ticks = none] 
      \node[color=green,left] (AA) at(5, 2){$T_0$};
      \addplot[green] {2};
      \node[color=green, left] (AA) at(5, 1){$T_1$};
      \addplot[green] {1};
      \node[color=green, left] (AA) at(5, 0.5){$T_2$};
      \addplot[green] {0.5};
      \node[color=red, left] (BB) at(-6, 2.2){$S_i$};
      \addplot[red] {-0.18*x+1.02};
      \node[color=violet, left] (CC) at(5, 0){$R_2$};
      \addplot[violet] {-0.36*x + 1.5};
      \node[color=violet, left] (CC) at(5, -0.5){$R_1$};
      \addplot[violet] {-0.36*x + 1.06};
      \node[color=violet, left] (CC) at(5, -1){$R_0$};
      \addplot[violet] {-0.36*x + 0.5};
    \end{axis}
    \end{tikzpicture}
    \caption{Ordering of critical lines in three directions.}
    \label{fig:lattice_Rec}
\end{figure}
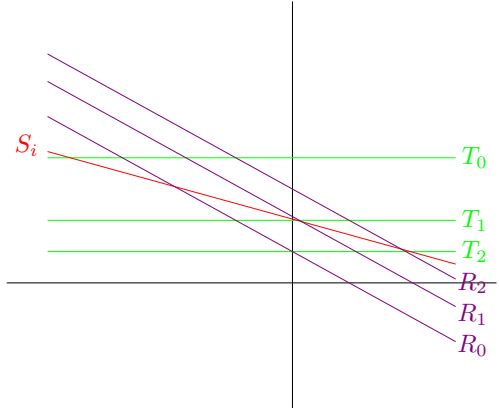
For each critical line $S_i$, the lines $T_j$ and $R_j$ are moved simultaneously to search for triple intersection points. To optimize the algorithm, each of the sets of lines $T$ and $R$ has to be ordered. Since 
$S_i$ has negative slope, the $T_j$ 
lines have to be ordered decreasingly (with $T_0$ above the others) while the $R_j$ 
lines must be ordered increasingly (with $R_0$ below the others). 

\medskip

\noindent
\textbf{The rolling ball algorithm.}
For every $S_i$, we start by intersecting  it with the first lines $T_0$ and $R_0$, obtaining
$P_t = S_i \cap T_0$ and $P_r = S_i \cap R_0$.
If $P_t = P_r$, a triple point has been found and we can move to $S_{i+1}$. If not, we check which point has a lower value of its $x$-coordinate (hence the name of the algorithm). 
If $x(P_t) < x(P_r)$, as depicted in Fig.\;\ref{fig:lattice_Rec}, then the line $T_0$ can be discarded: since $S_i$ has negative slope and the lines $T$ and $R$ are ordered, a triple intersection cannot occur with $T_0$.

In general, at each step of the algorithm, given a line $S_i$ we compare, in the 
prescribed order, $T_j$ with $R_k$ and check for the intersections $P_t$ and $P_r$. If $P_t \neq P_r$, then we move to compare $T_{j+1}$ and $R_k$ if $x(P_t) < x(P_r)$ and we move to compare $T_{j}$ and $R_{k+1}$ otherwise.

In order to compare the speed of the naive algorithm with our rolling ball algorithm, we used a bank of functions with a varying number of critical points. For that purpose, we programmed a snippet of code that returns a piecewise linear function with the desired number of points. 

We ran both algorithms with functions with the following number of critical points: $5$, $10$, $25$, $50$, $100$, $150$, $200$, and recorded the time in seconds each algorithm takes for each function. 
Table \ref{table:comparison} shows the comparison between the two algorithms.
The difference in optimization is really noticeable. In fact, when dealing with $200$ critical points, the naive reconstructing algorithm took a bit over $300$ seconds while the optimized algorithm took only $1.6$ seconds. The difference between time is what makes the optimized algorithm a feasible algorithm to be applied to large quantities of data. 
\begin{table}[H]
\centering
\begin{tabular}{@{}rrrrrrrr@{}}
& \textbf{5} & \textbf{10} & \textbf{25} & \textbf{50} & \textbf{100} & \textbf{150} & \textbf{200} \\ \hline \\[-0.3cm]
\textbf{Naive}     & 0.008      & 0.026       & 0.455       & 4.431        & 36.34         & 127.1        & 301.7        \\
\textbf{Optimized} & 0.002      & 0.003       & 0.017       & 0.115       & 0.278        & 0.835        & 1.635         \\[0.05cm] \hline
\end{tabular}
\caption{Comparison of a naive reconstruction algorithm with the rolling ball algorithm. Time is recorded in seconds. The upper row indicates the number of critical points of the given piecewise linear functions.}
\label{table:comparison}
\end{table}

\subsection{The smooth case}

Suppose given a smooth function $f\colon [a,b]\to\mathbb{R}$, and assume that $f''(x_i)\ne 0$ for all critical points $(x_i,f(x_i))$.
Hence we discard the possibility that $f$ is ``too flat'' at some critical point (similarly as in the piecewise linear case), and only isolated critical points are considered.

Let $T$ be a tangent line to $f$ at a critical point $(x_0,y_0)$. By \cite{horwitz}, $(x_0,y_0)$ is a limit point of intersection points of tangent lines $T_j$ close to $T$ (in the sense that both the slope and the intercept of $T_j$ are close to those of $T$). In our situation, such tangent lines are provided by directional persistence diagrams approaching the vertical height direction as much as needed (so the tangent lines become close to horizontal).

Our algorithm works as follows.
If $(x_0,y_0)$ is a critical point of $f$ with $a<x_0<b$, then $f'(x_0)=0$ and the horizontal line $y=y_0$ is a critical line in the vertical direction. Hence, in order to obtain  $x_0$ from directional persistence diagrams, it suffices that the algorithm detects the existence of $(x_0,y_0)$ and that it converges to~$x_0$. We treat detection and convergence separately.

\medskip

\noindent
\textbf{Detection of critical points.}
Let $(x_0,y_0)$ be a critical point of $f\colon [a,b]\to\mathbb{R}$ with $a<x_0<b$. For a direction $v=e^{i\theta}$ with $0<\theta<\pi/2$ and a positive real number $\tau>0$, we say that $(x_0,y_0)$ is \emph{$\tau$-detected} by the direction $v$ if there are critical lines of slopes $\pm1/\tan\theta$ intersecting $y=y_0$ at points $(x_1,y_0)$ and $(x_2,y_0)$ with $x_1<x_0<x_2$ and $x_2-x_1<\tau$ (see Fig.\;\ref{triangle}), and there is no other critical point of $f$ whose $x$-coordinate lies between $x_1$ and $x_2$.

\begin{prop}
\label{Detection}
Suppose that $(x_0,y_0)$ is a critical point of a smooth function $f\colon [a,b]\to \mathbb{R}$.
Suppose that $|f''(x)|>\varepsilon$ if $|x-x_0|<\delta$ for some numbers $\delta>0$ and $\varepsilon>0$.
If we pick an angle $0<\theta<\pi/2$ such that $\tan\theta>1/\delta\varepsilon$ and a number $\tau$ with
$\tau>2\delta$, then $(x_0,y_0)$ is $\tau$-detected by~$e^{i\theta}$.
\end{prop}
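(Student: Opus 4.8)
The plan is to produce, near $(x_0,y_0)$, a tangent line to the graph of $f$ of slope $+1/\tan\theta$ and one of slope $-1/\tan\theta$, to check that both are critical lines, and to verify that their intersections with the line $y=y_0$ bracket $x_0$ in an interval of length less than $\tau$ that contains no other critical point of $f$. Throughout I write $s=1/\tan\theta=\cot\theta$, so that the hypothesis $\tan\theta>1/(\delta\varepsilon)$ becomes $s<\delta\varepsilon$, i.e.\ $s/\varepsilon<\delta$. I may assume that $x_0$ is interior and that $I:=(x_0-\delta,x_0+\delta)\subseteq(a,b)$.

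First I would locate the two tangent points. Since $f$ is smooth, $f''$ is continuous and, having absolute value greater than $\varepsilon$ on the connected set $I$, it keeps a constant sign there; hence $f'$ is strictly monotone on $I$, with $f'(x_0)=0$. For $0<\eta<\delta$ the fundamental theorem of calculus and $f'(x_0)=0$ give $|f'(x_0+\eta)|>\varepsilon\eta$ and $|f'(x_0-\eta)|>\varepsilon\eta$, so $|f'|$ exceeds $s$ on both sides of $x_0$ once $\eta$ passes $s/\varepsilon<\delta$. By the intermediate value theorem and strict monotonicity of $f'$ there are unique points $x^{+},x^{-}\in I$ with $f'(x^{+})=s$ and $f'(x^{-})=-s$, and they lie on opposite sides of $x_0$; the same integral estimate, $s=|f'(x^{\pm})|=\big|\int_{x_0}^{x^{\pm}}f''\big|>\varepsilon\,|x^{\pm}-x_0|$, yields $|x^{\pm}-x_0|<s/\varepsilon$.

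Next I would check that the tangent line $T^{+}$ to the graph at $(x^{+},f(x^{+}))$ is a critical line. It has slope $s$, hence is orthogonal to $e^{i(\pi-\theta)}$, and the height along the graph in that direction, $h(x)=-x\cos\theta+f(x)\sin\theta$, satisfies $h'(x^{+})=-\cos\theta+s\sin\theta=0$ and $h''(x^{+})=f''(x^{+})\sin\theta\neq0$ since $x^{+}\in I$. Thus $x^{+}$ is a strict local extremum of $h$ in the interior of $[a,b]$. For a fixed direction the sublevel sets of the graph are homeomorphic to the sublevel sets of $h$ inside $[a,b]$, whose connected components are exactly maximal subintervals; consequently a strict interior local minimum of $h$ creates a new component, and a strict interior local maximum forces two distinct components to merge (any subinterval joining the two sides would contain a point where $h$ exceeds the current height). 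Either way the number of components changes at height $h(x^{+})$, so $T^{+}$ is a critical line of slope $+1/\tan\theta$; the same argument, using $e^{i\theta}$, shows that the tangent line $T^{-}$ at $(x^{-},f(x^{-}))$ is a critical line of slope $-1/\tan\theta$. I expect this step to be the main point to get right: it is where one-dimensionality is used, so that a local maximum genuinely destroys a component rather than merely being a tangency inside a single component.

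Finally I would bracket $x_0$. The intersection of $T^{\pm}$ with $y=y_0=f(x_0)$ has abscissa $x_0^{\pm}=x^{\pm}-\tfrac{1}{f'(x^{\pm})}\int_{x_0}^{x^{\pm}}f'$. On the interval between $x_0$ and $x^{\pm}$ the derivative $f'$ runs monotonically from $0$ to $\pm s$ without changing sign, so $\int_{x_0}^{x^{\pm}}f'$ has the same sign as $x^{\pm}-x_0$ and absolute value strictly between $0$ and $s\,|x^{\pm}-x_0|$; hence $x_0^{\pm}$ lies strictly between $x^{\pm}$ and $x_0$. Since $x^{+}$ and $x^{-}$ are on opposite sides of $x_0$, so are $x_0^{+}$ and $x_0^{-}$; writing $\{x_1,x_2\}=\{x_0^{+},x_0^{-}\}$ with $x_1<x_2$ gives $x_1<x_0<x_2$ and $x_2-x_1<|x^{+}-x^{-}|\le|x^{+}-x_0|+|x^{-}-x_0|<2s/\varepsilon<2\delta<\tau$. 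Moreover $(x_1,x_2)\subseteq I$, on which $f'$ is strictly monotone with its only zero at $x_0$ and which lies inside $(a,b)$, so $(x_1,x_2)$ contains no critical point of $f$ besides $x_0$. This shows that $(x_0,y_0)$ is $\tau$-detected by $e^{i\theta}$.
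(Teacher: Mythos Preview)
Your proof is correct and follows the same overall strategy as the paper: locate tangent points where $f'=\pm 1/\tan\theta$, intersect the tangent lines with $y=y_0$, and bound the resulting interval. The executions differ in two respects. First, the paper bounds $x_2-x_0$ by invoking Taylor's formula with Lagrange remainder at $x_0$, comparing the tangent with a parallel line through $\big(a,\,y_0+\tfrac12\varepsilon(a-x_0)^2\big)$, and running a short chain of inequalities to reach $x_2-x_0<\tfrac12\tau$; you instead bound the tangency abscissae directly via $s=|f'(x^{\pm})|>\varepsilon\,|x^{\pm}-x_0|$ and then use the convexity-style observation that $x_0^{\pm}$ lies strictly between $x^{\pm}$ and $x_0$, giving the sharper $x_2-x_1<2s/\varepsilon<2\delta<\tau$ without Taylor. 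Second, you explicitly verify that the tangents produced are \emph{critical} lines by checking that the directional height function $h$ has a nondegenerate extremum at $x^{\pm}$ (using $h''(x^{\pm})=f''(x^{\pm})\sin\theta\neq 0$) and that for a graph over an interval such an extremum changes the component count; the paper's proof takes this for granted. Your route is thus a little more self-contained and yields a tighter bracket, while the paper's Taylor computation makes the quantitative dependence on $\varepsilon$ and $\tau$ visible in a different way.
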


\begin{proof}
Suppose first that $f''(x_0)>0$.
Note first that
\[
f'(x_0+\delta)=\int_{x_0}^{x_0+\delta}f''(t)\,dt>\delta\varepsilon>\frac{1}{\tan\theta}.
\]
Hence there is a value $a$ with $x_0<a<x_0+\delta$ such that the tangent to the graph of $f$ at $(a,f(a))$ has slope $1/\tan\theta$. Now the Lagrange formula for the remainder of the first-order Taylor polynomial of $f$ at $x_0$ yields
\[
\textstyle
f(a)=y_0+\frac12f''(c)(a-x_0)^2>y_0+\frac12\varepsilon(a-x_0)^2
\]
for some $c$ with $x_0<c<a$.
The tangent to the graph of $f$ at $(a,f(a))$ intersects $y=y_0$ at a point $(x_2,y_0)$ and the parallel line through $(a,\,y_0+\frac12\varepsilon(a-x_0)^2)$ intersects $y=y_0$ at $(x_*,y_0)$ with $x_*=a-\frac12\varepsilon\tan\theta(a-x_0)^2$. Since $\tau>2/\varepsilon\tan\theta$ and $a-x_0<\delta<\frac12\tau$, we find that
\begin{align*}
x_2-x_0 & <x_*-x_0=
(a-x_0)\big[1-\textstyle\frac12\varepsilon\tan\theta(a-x_0)\big] \\ & <
(a-x_0)\Big(1-\frac{a-x_0}{\tau}\Big)<\textstyle\frac12 (x_0+\tau-a)<\textstyle\frac12 \big(\textstyle\frac12\tau+\frac12\tau\big)=\frac12\tau.
\end{align*}
By symmetry, there is a tangent with slope $-1/\tan\theta$ at a point $(b,f(b))$ with $x_0-\delta<b<x_0$ intersecting $y=y_0$ at a point $(x_1,y_0)$ with $x_0-x_1<\frac12\tau$. Consequently, $x_2-x_1<\tau$. Moreover, there is no other critical point whose $x$\nobreakdash-co\-ord\-inate lies between $x_1$ and $x_2$ since $f'$ is monotone in this interval. Hence, $(x_0,y_0)$ is $\tau$-detected by the direction $e^{i\theta}$.
Again by symmetry, the conclusion is the same if $f''(x_0)<0$.
\end{proof}

\noindent
\textbf{Convergence.}
Suppose that $(x_0,y_0)$ is a local \emph{minimum} that has been detected by the algorithm. Let $L_0$ be the corresponding horizontal critical line $y=y_0$, and let $v_1=e^{i\theta_1}$ be the direction involved in the successful detection process.
Choose another direction $v_2=e^{i\theta_2}$ with
$0<\theta_1<\theta_2<\pi/2$ 
(in fact, we pick $\theta_2$ sufficiently close to $\pi/2$).

Let us denote 
$m_1=1/\tan\theta_1$ and $m_2=1/\tan\theta_2$,
and let $L_1$, $L_2$, $L_3$, $L_4$ be the corresponding critical lines of respective slopes $-m_1$, $m_1$, $-m_2$ and $m_2$.
Let $(x_i,y_0)$ denote the intersection point of $L_i$ with $L_0$ for $i=1,2,3,4$.

The $x$-coordinate $x_0$ of the critical point $M$ is located between $x_3$ and $x_4$ (although it need not be the midpoint), and we can approximate it by considering the line $L$ passing through the vertices $P$ and $Q$ of the triangles formed by $L_0,L_1,L_2$ and by $L_0,L_3,L_4$ respectively.
The intersection point $M_*=L\cap L_0$ is $(x_*, y_0)$, where
\begin{equation}
\label{mmm}
x_*=\frac{m_2(x_1+x_2)(x_3-x_4)-m_1(x_1-x_2)(x_3+x_4)}{2m_2(x_3-x_4)-2m_1(x_1-x_2)}.
\end{equation}
This is taken as an approximation to the true $x$-coordinate of $M=(x_0,y_0)$.

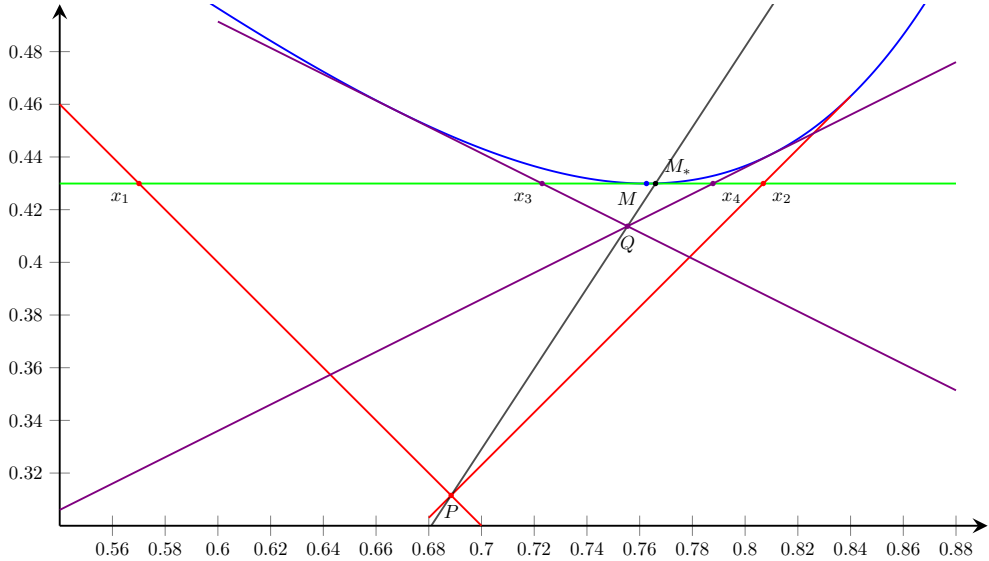
\begin{figure}[htb]
    \centering
    \begin{tikzpicture}[scale=1.8, every node/.style={scale=0.4}]
        \begin{axis}[every axis plot post/.append style={
        mark=none,samples=100,smooth}, 
        axis x line=middle, 
        axis y line=middle, 
        enlargelimits=upper,
          xmin=0.54, xmax=0.86,
          ymin=0.30, ymax=0.48,
          xtick distance=0.02,
          ytick distance=0.02,
          axis equal image,
          grid = none] 
          \addplot[domain=0.54:0.88, color=blue] {x^9 - x^4 + x^3 - x + 1};
          \addplot[domain=0.54:0.88, color=green] {0.429917};
          \addplot[domain=0.54:0.7, color=red] {-x + 0.570083 + 0.429917};
          \addplot[domain=0.68:0.84, 
          color=red] {x - 0.806887 + 0.429917};
          \addplot[domain=0.60:0.88,
          color=violet] {-0.5*x + 0.5*0.722979 + 0.429917};
          \addplot[domain=0.54:0.88, 
          color=violet] {0.5*x - 0.5*0.787817 + 0.429917};
          \addplot[domain=0.68:0.82, color=black, opacity=0.7] {1.5272443322*x - 0.73997};
          \node[fill, color=red, circle, inner sep=1, label={below left: $x_1$}] at (0.570083, 0.429917) {};
          \node[fill, color=red, circle, inner sep=1, label={below right: $x_2$}] at (0.806887, 0.429917) {};
          \node[fill, color=violet, circle, inner sep=1, label={below left: $x_3$}] at (0.722979, 0.429917) {};
          \node[fill, color=violet, circle, inner sep=1, label={below right: $x_4$}] at (0.787817, 0.429917) {};
          \node[fill, color=red, circle, inner sep=1, label={below: $P$}] at (0.688485, 0.311515 ) {};
          \node[fill, color=violet, circle, inner sep=1, label={below: $Q$}] at (0.755398, 0.4137075) {};
          
          \node[fill, color=blue, circle, inner sep=1, label={below left: $M$}] at (0.762580, 0.429917) {};
          \node[fill, color=black, circle, inner sep=1, label={above right: $M_*$}] at (0.766012, 0.429917) {};
        \end{axis}
    \end{tikzpicture}
    \caption{An approximation $M_*$ to the critical point $M$ is obtained by means of five critical lines. The point $M_*$ is the intersection of the line $L=PQ$ with the horizontal tangent $L_0$. Here $P$ is the intersection of tangents with slopes $\pm 45^\circ$ and $Q$ is obtained using slopes of $\pm 30^\circ$. The function represented in the picture is $f(x)=x^9 - x^4 + x^3 - x + 1$.}
    \label{triangle}
\end{figure}

This step can be repeated with smaller slopes in order to achieve any desired precision, and the case of a local maximum is treated analogously, by symmetry. 

\begin{thm}
\label{SmoothTheorem}
Let $f\colon [a,b]\to{\mathbb R}$ be a smooth function, and let $(x_0,y_0)$ be a critical point of $f$ with $a<x_0<b$ and such that $f''(x_0)\ne 0$.
Then $x_0$ can be approximated with any arbitrary degree of precision by means of critical lines.
\end{thm}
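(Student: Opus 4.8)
\medskip
\noindent\textbf{Proof idea.}
The plan is to obtain Theorem~\ref{SmoothTheorem} as an almost immediate consequence of Proposition~\ref{Detection}, treating the five-line rule \eqref{mmm} only as a practical refinement. First I would record the standing consequence of the hypothesis: since $f''$ is continuous and $f''(x_0)\neq 0$, there exist $\delta_0>0$ and $\varepsilon_0>0$ (for instance $\varepsilon_0=\tfrac12|f''(x_0)|$) such that $|f''(x)|>\varepsilon_0$ whenever $|x-x_0|<\delta_0$; and since $f$ has only finitely many critical points, after shrinking $\delta_0$ I may assume that $x_0$ is the only critical abscissa in $(x_0-\delta_0,x_0+\delta_0)$.

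Next, given an arbitrary target precision $\eta>0$, I would set $\delta=\min(\delta_0,\eta/3)$ and $\tau=\eta$, so that $\tau>2\delta$ while $|f''|>\varepsilon_0$ still holds on $(x_0-\delta,x_0+\delta)$. Proposition~\ref{Detection}, applied with these $\delta$, $\varepsilon_0$ and $\tau$, says that any direction $e^{i\theta}$ with $\tan\theta>1/(\delta\varepsilon_0)$ (that is, any direction sufficiently close to the vertical) $\tau$-detects $(x_0,y_0)$: there are critical lines of slopes $\pm 1/\tan\theta$ meeting $y=y_0$ at points $(x_1,y_0)$ and $(x_2,y_0)$ with $x_1<x_0<x_2$, $x_2-x_1<\tau=\eta$, and no other critical point in between. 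Hence the midpoint $\tfrac12(x_1+x_2)$, computed from just these two critical lines, satisfies $\bigl|\tfrac12(x_1+x_2)-x_0\bigr|<\eta/2<\eta$. As $\eta$ was arbitrary, this already proves that $x_0$ is approximable to any precision by means of critical lines.

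To connect this with the algorithm, I would then run detection a second time in a direction $e^{i\theta_2}$ with $\theta_1<\theta_2<\pi/2$ and $\theta_2$ close to $\pi/2$, obtaining a second bracketing $x_3<x_0<x_4$ with $x_4-x_3$ as small as desired, and observe that the five lines $L_0,\dots,L_4$ yield the estimate $x_*$ of \eqref{mmm}; since $x_*$ stays inside the narrower interval $[x_3,x_4]$ for $\theta_2$ near $\pi/2$ (the line $PQ$ joins the two downward apices of the triangles and meets $L_0$ between their bases, as in Figure~\ref{triangle}), this sharper estimate converges to $x_0$ as well.

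I expect the only delicate point to be the bookkeeping of quantifiers in Proposition~\ref{Detection}: it requires $\tau>2\delta$, so forcing $\tau$ small forces $\delta$ small, which is harmless precisely because shrinking the neighbourhood only strengthens the bound $|f''|>\varepsilon_0$; and the ``no other critical point in between'' clause is guaranteed by isolation of critical points. For the refinement, the mild extra check is that $x_*$ remains between $x_3$ and $x_4$ (equivalently, that $P$ and $Q$ lie on the same side of $L_0$), which follows from the non-degeneracy $f''(x_0)\neq 0$ via the local parabolic model of $f$ near the critical point.
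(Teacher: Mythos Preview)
Your argument is correct, and it takes a genuinely different route from the paper's proof. The paper applies Proposition~\ref{Detection} once, with a fixed pair $(\delta,\varepsilon)$ and hence a fixed $\tau$, to secure detection; it then chooses a \emph{sequence} of directions $e^{i\theta_k}$ with slopes tending to zero, computes the five-line estimate \eqref{mmm} for each pair of consecutive directions, and invokes the convergence result of \cite{horwitz} to conclude that these estimates tend to~$x_0$. You instead vary $\delta$ (and hence $\tau$) in Proposition~\ref{Detection} itself: given a target precision $\eta$, you shrink $\delta$ to $\min(\delta_0,\eta/3)$, set $\tau=\eta$, and obtain directly a bracket $x_1<x_0<x_2$ of width $<\eta$. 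This is more elementary and fully self-contained --- it needs neither the formula \eqref{mmm} nor the external reference to Horwitz --- and it exploits the key observation that the constraint $\tau>2\delta$ is harmless because the lower bound $|f''|>\varepsilon_0$ only improves when $\delta$ shrinks. The paper's route, by contrast, mirrors the actual five-line algorithm (fixed detection followed by refinement), so it explains \emph{why the algorithm converges} rather than merely why some approximation exists. Your final paragraph, connecting back to \eqref{mmm}, is reasonable commentary but, as you note, not needed for the theorem; the claim that $x_*\in[x_3,x_4]$ would require a short computation (or the parabolic model you mention) to justify, so it is appropriate that you do not rely on it for the main conclusion.
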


\begin{proof}
If $f''(x_0)\ne 0$ then there exist $\varepsilon>0$ and $\delta>0$ such that $|f''(x)|>\varepsilon$ if $|x-x_0|<\delta$.
Use Proposition~\ref{Detection} to find an angle $\theta$ with $0<\theta<\pi/2$ and a number $\tau>0$ such that $(x_0,y_0)$ is $\tau$-detected by $e^{i\theta}$. Next, choose a sequence of pairs of critical lines $L_{2k-1}$, $L_{2k}$ tangent to the graph of $f$ with slopes $\pm1/\tan\theta_k$ tending to zero, and intersecting the line $y=y_0$ at points within $(x_0-\frac12\tau,\,x_0+\frac12\tau)$. If we calculate \eqref{mmm} in each case, the resulting sequence converges to $x_0$ as in~\cite{horwitz}.
\end{proof}

\medskip

\noindent
\textbf{The five-line algorithm.}
The first step of the algorithm aims to detect all critical points, even with a risk of introducing false positives. Since the functions are assumed to be unknown and the algorithm depends on an oracle with information from directional persistence diagrams, we use the same values of $\theta$ and $\tau$ for all critical points. All triangles with base less than $\tau$ formed by critical lines are tentatively assumed to contain a critical point. The algorithm usually starts with a slope of~$30^\circ$ and a value $\tau = 0.08$. 

\begin{figure}[htb]
    \centering
    
        \begin{subfigure}{0.44\textwidth}
          \centering
          \includegraphics[width=0.8\textwidth]{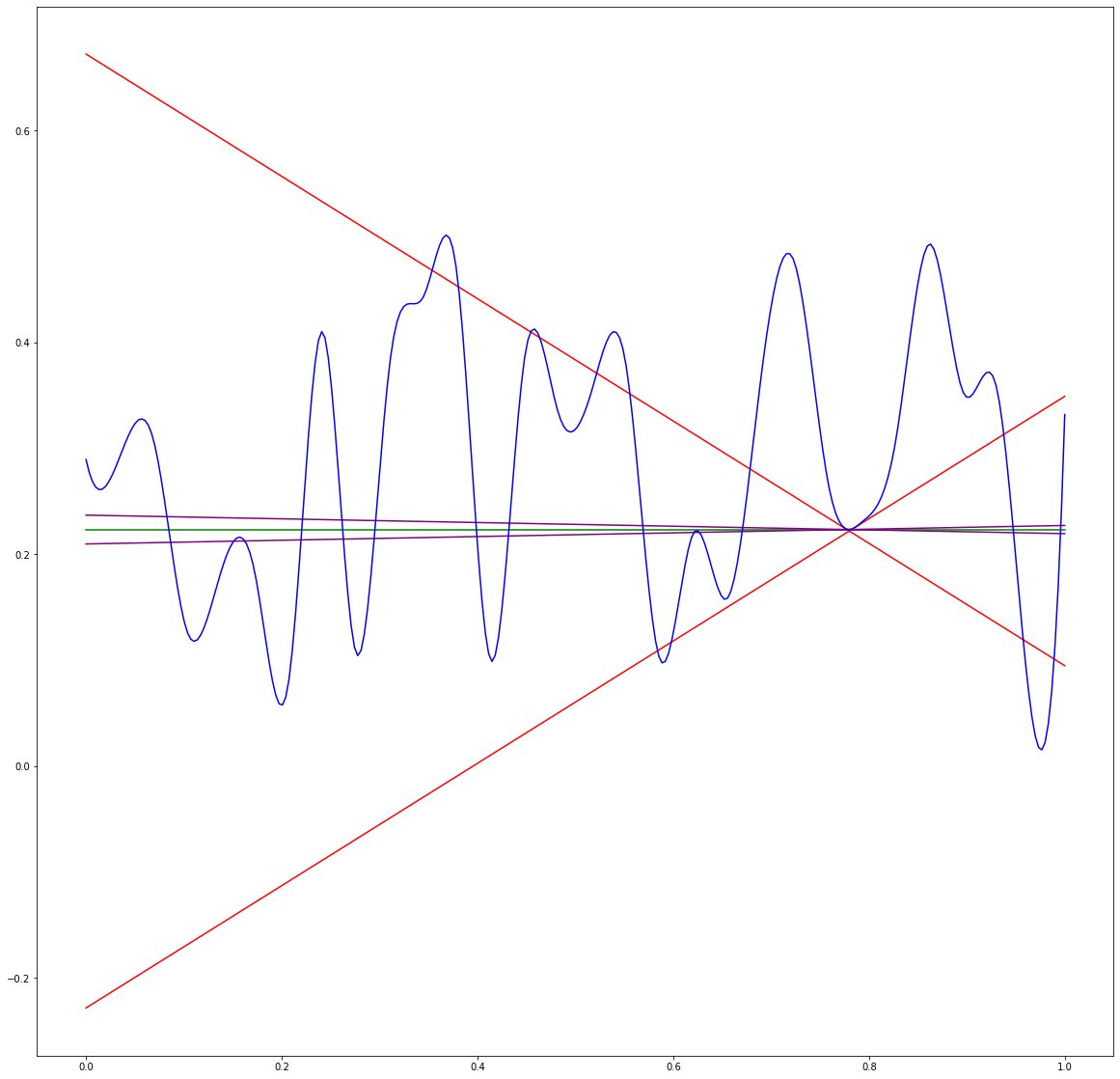} 
          \caption{Location of an asymmetric critical point by means of five tangent lines.}
          \label{crit_a}
        \end{subfigure} \hspace{2em}
        \begin{subfigure}{0.44\textwidth}
          \centering
          \includegraphics[width=0.8\textwidth]{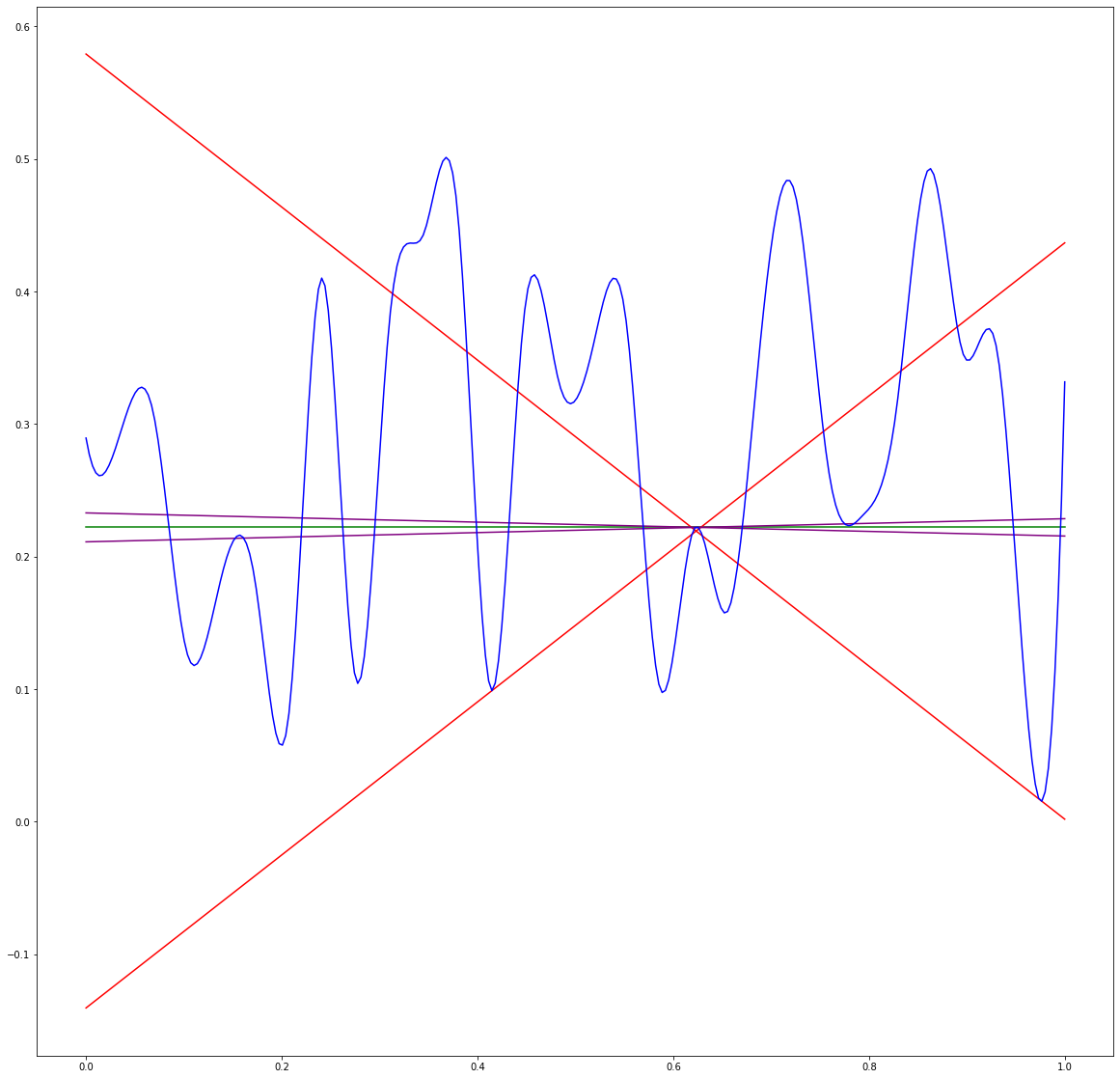}
          \caption{A critical point coincidentally detected by quasi-multiple tangent lines (red).}
        \label{crit_b}
        \end{subfigure}
        
        \begin{subfigure}{\textwidth}
          \centering
          \includegraphics[width=0.48\textwidth]{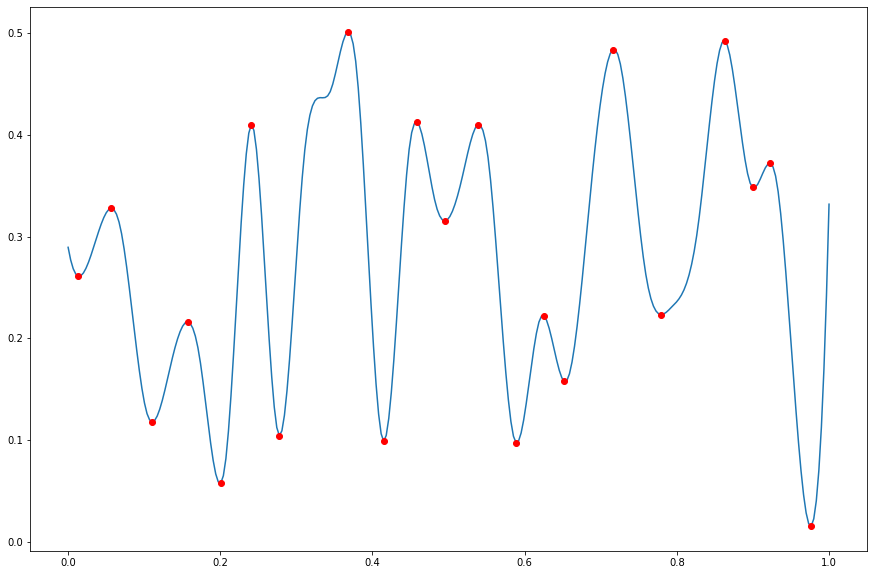}  
          \caption{The algorithm finds all critical points except perhaps endpoints (inflection points are not considered).}
          \label{crit_c}
        \end{subfigure} \hspace{2em}
        
    \caption{Instances of the five-line algorithm for smooth functions.}
    \label{fig:rec_process}
\end{figure}

The second step of the algorithm tries to eliminate false positives and accurately locate each critical point for true positives. Our working hypothesis is that critical lines with slope close to $0$ will only pass inside a triangle from the first step 
(that is, $x_3$ and $x_4$ are between $x_1$ and $x_2$ as depicted in Fig.\;\ref{triangle})
if that triangle truly contains a critical point. Typically, we use a slope of $0.1^\circ$ for that step, and we disregard candidate triangles where betweenness fails. Finally, we apply the formula \eqref{mmm} to every resulting pair of triangles to approximate the enclosed critical point.

The choice of parameters depends on the nature of the signals to which the algorithm is to be applied. In the first step of the algorithm, the choice of $\tau$ is crucial. Smaller values of $\tau$ may miss some critical points but also reduce the number of candidate triangles, thus diminishing the possibility of a false positive. Bigger values of $\tau$ ensure that no critical point is missed, but the chances of false positives increase. Figure~\ref{crit_a} shows an example where a critical point could be missed due to asymmetry.

The working hypothesis made in the second step of the algorithm can fail due to the existence of lines that are tangent at a critical point and very close to being tangent at another critical point of the graph. We call the latter \emph{quasi-multiple} tangent lines. 

Experimental results show that triangles formed with quasi-multiple tangent lines usually appear next to triangles that truly contain a critical point. In those cases, the same pair of critical lines with small slope appears between both triangles, resulting in closeby duplicate images of the same critical point.
Since lines close to horizontal are the ones really deciding where a critical point is, in the algorithm we impose that each pair of lines is used only once. If the same pair of lines passes inside another triangle, then the resulting approximate critical point will be very close to one that we already have, and thus there is no need to recompute it. Figure~\ref{crit_b} shows an example of a critical point detected by a triangle of quasi-multiple tangent lines. 

In most cases the algorithm retrieves all critical points correctly, as in Fig.\;\ref{crit_c}.
However, if we suppose that the data functions are unknown, there is no way to decide if a function is correctly approximated or not. For this reason, we normally restrict the algorithm to five tangent lines and do not iterate further. Instead, we have implemented a simple test that checks if local maxima and local minima alternate in the resulting graph.

We first applied the five-line algorithm to 100 functions generated as combinations of sines and cosines, and checked in which cases all the critical points were properly retrieved with at least 4 digits of precision. All functions shared the domain $[0, 1]$ and had between 20 and 30 critical points. 
This kind of functions may present periodicity, and critical points are usually symmetric. By using slopes of $30^\circ$ and $0.1^\circ$, and 
$\tau = 0.08$, we achieved $89\%$ of correctly approximated functions. 
Errors in the reconstruction tended to be due to false negatives, i.e., candidate critical points that had been left out in the first step of the algorithm, rather than false positives. 

A second test was performed over 100 functions generated by spline interpolation of third order over 30 points on the interval $[0, 1]$. Such functions are harder to approximate since inflection points, flat critical points and asymmetric critical points appear with more frequency than in the previous case. Again, we obtained around $90\%$ of properly approximated functions. 
For this kind of functions, errors regarding false positives and false negatives were balanced. 

\section{Approximating a function from selected landscape levels}

Landscapes of persistence diagrams were introduced in \cite{Bubenik2015} as follows. For each point $(\beta,\delta)$ in a given persistence diagram, one denotes
$
\Lambda_{(\beta, \delta)}(t) = \max \{0, \min \{t-\beta, \delta-t\}\}
$.
Then a piecewise linear function $\lambda_k \colon\mathbb{R}\to\mathbb{R}$ is defined for each $k\ge 1$ as
\[
\lambda_k(t) = {\rm kmax} \{ \Lambda_{(\beta_i, \delta_i)}(t)\},
\]
where $\{(\beta_i,\delta_i)\}$ is the set of all points in the given persistence diagram and kmax returns the $k$-th largest value of a given set of real numbers whose elements are possibly counted with multiplicities, or zero if there is no $k$-th largest value. Since the number of points in a persistence diagram is finite, there is an integer $n$ such that $\lambda_k=0$ for $k\ge n$.
The first landscape levels $\lambda_1,\lambda_2\dots$ depict the most significant features from the persistence diagram, while the last ones correspond to short-living phenomena or perhaps to noise.

\begin{figure}[htb]
    \centering
    
      \begin{subfigure}{\textwidth}
      \centering
          \includegraphics[width=0.24\textwidth]{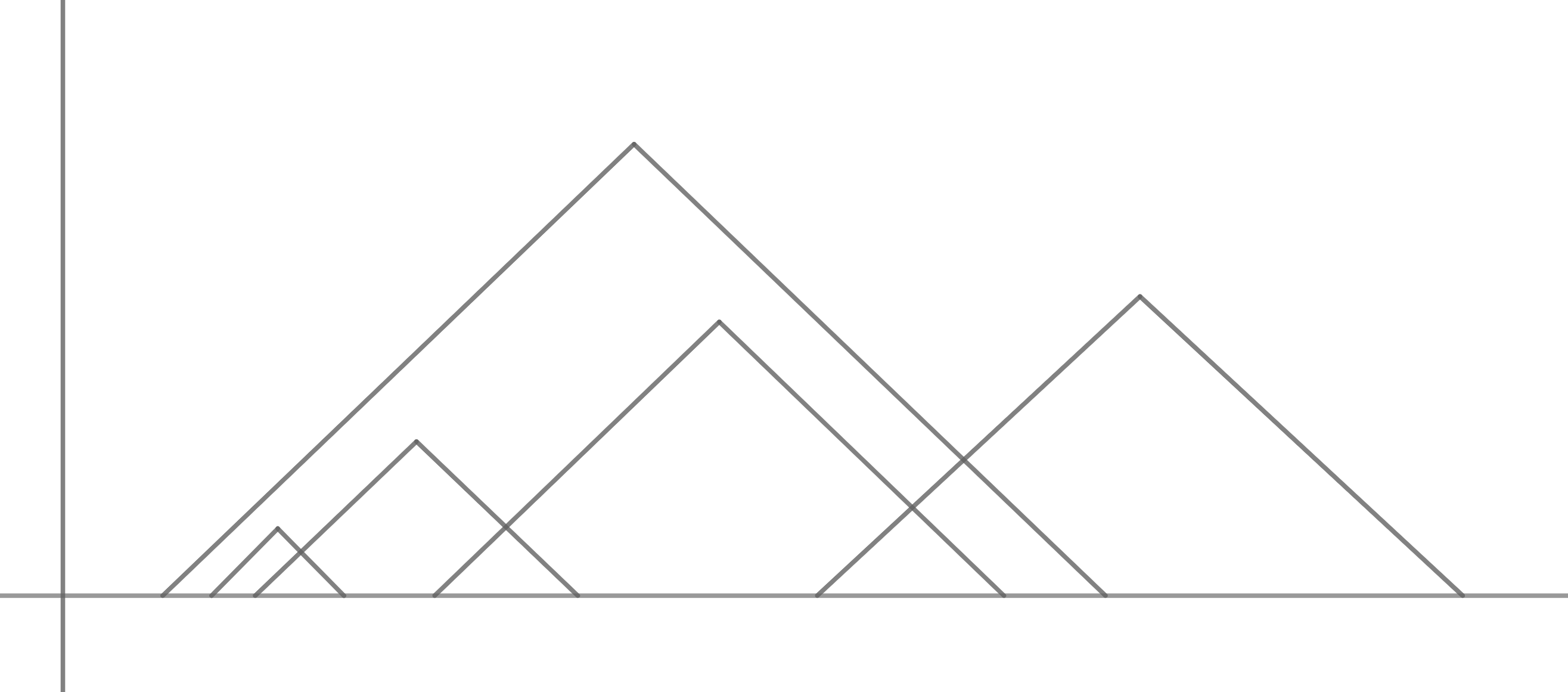} \hspace{0.0cm}
          \includegraphics[width=0.24\textwidth]{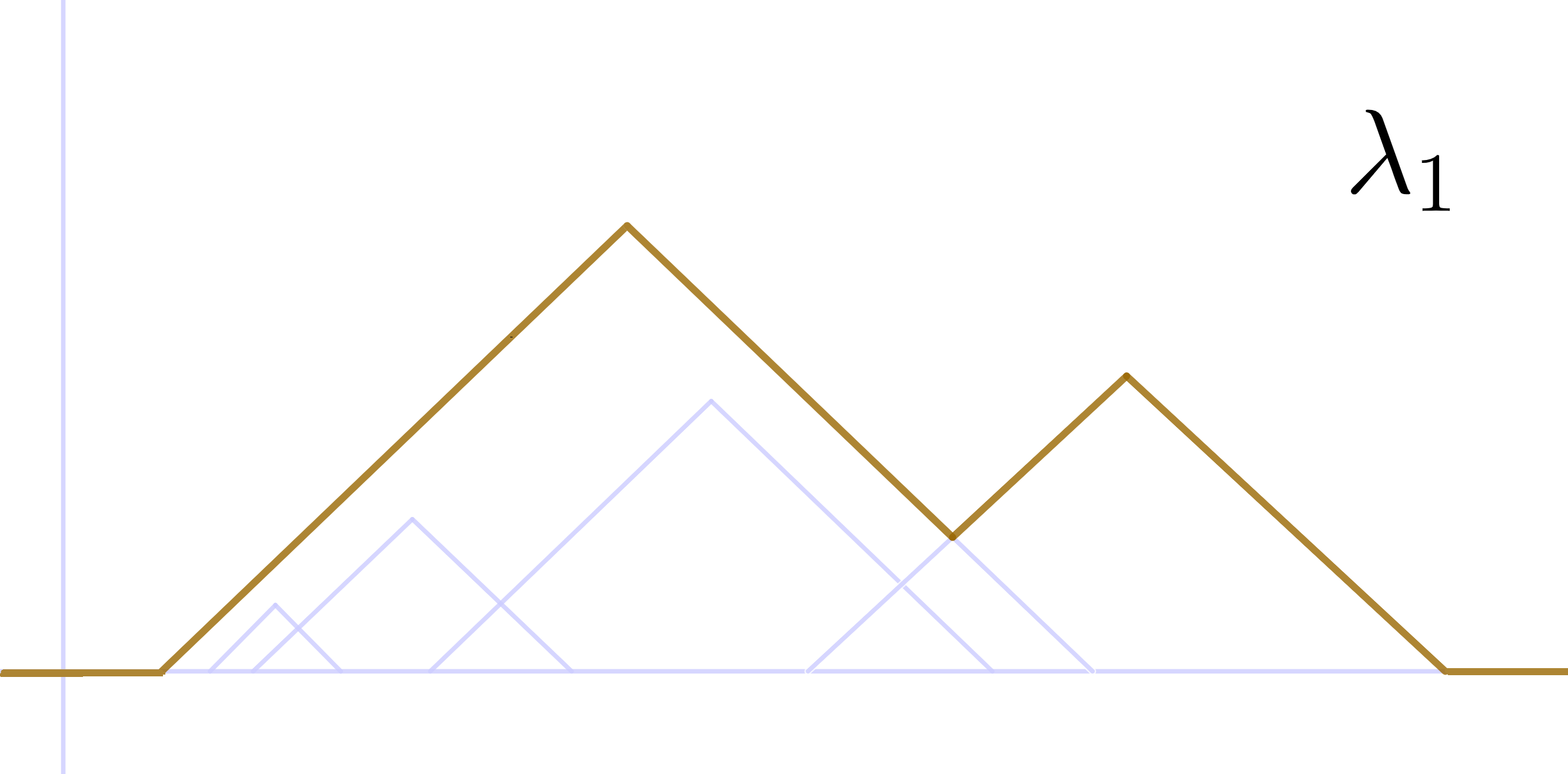} \hspace{0.0cm}
          \includegraphics[width=0.24\textwidth]{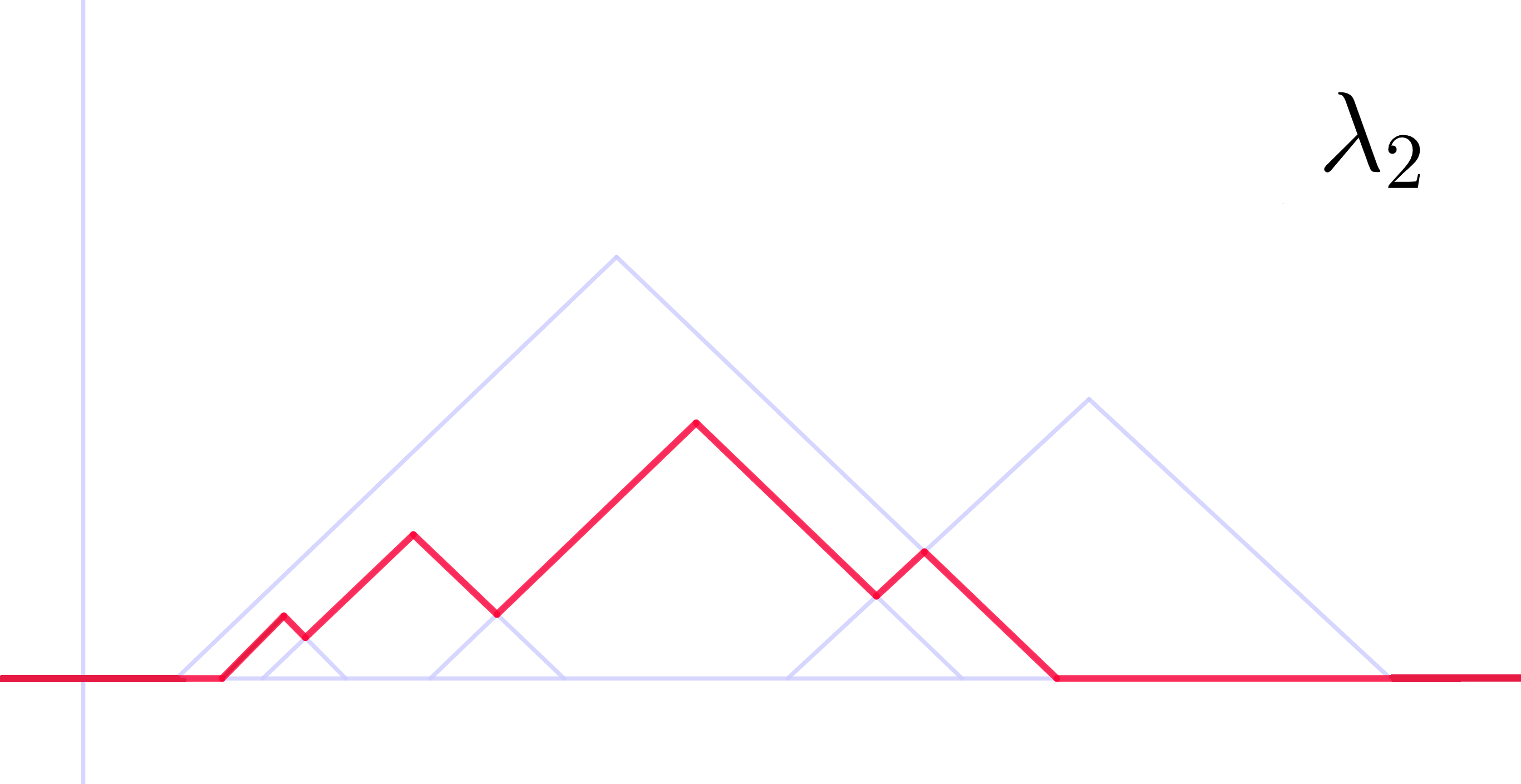} \hspace{0.0cm}
          \includegraphics[width=0.24\textwidth]{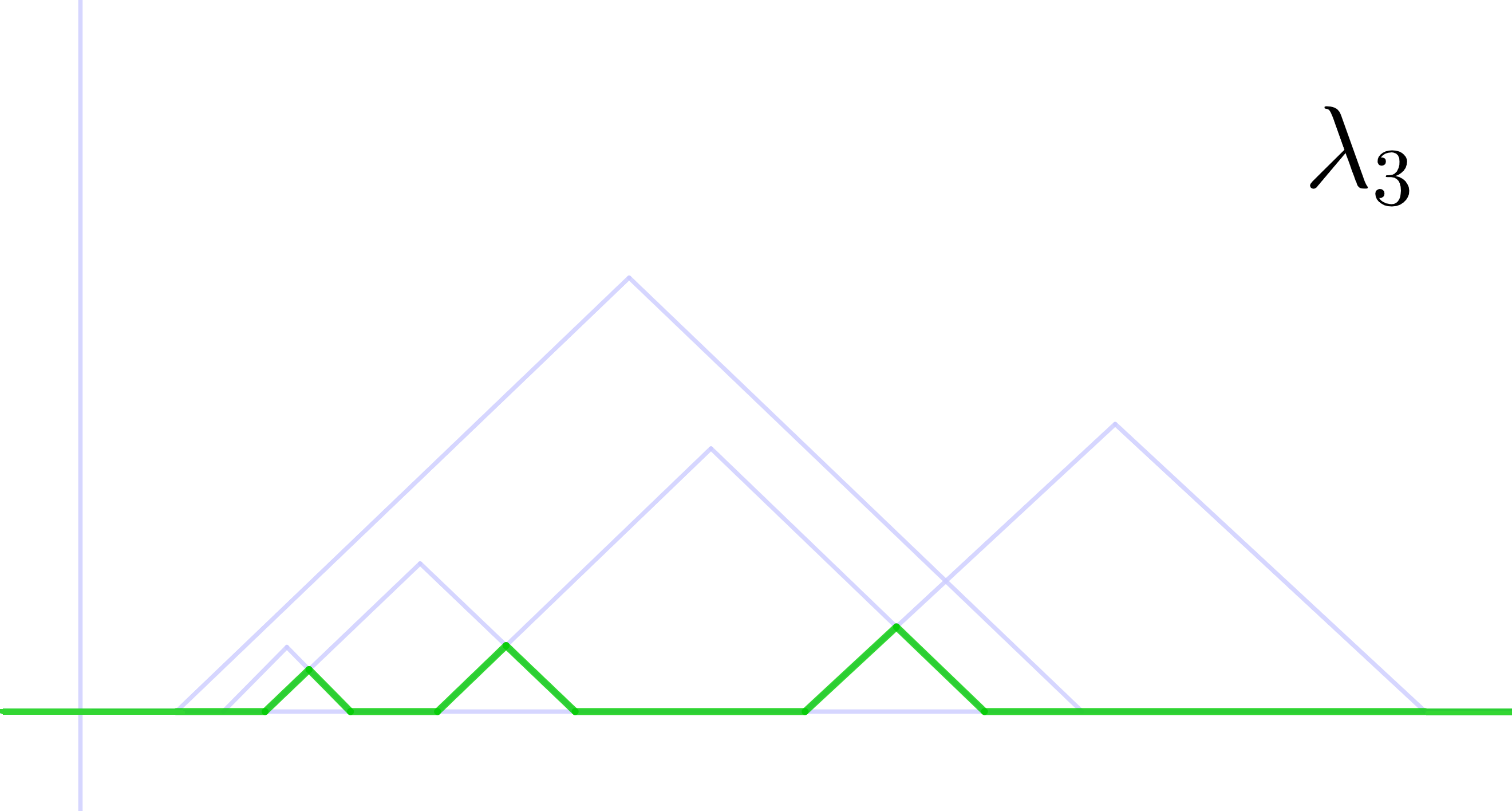}
        \end{subfigure}

    \caption{Sequence of nonzero levels of a persistence landscape (left).}
    \label{landscapes}
\end{figure}

Suppose given a continuous function $f\colon [a,b]\to\mathbb{R}$ and consider the persistence diagram of sublevel sets of $f$ in the vertical direction.
In this case, for each point $(\beta, \delta)$ in the persistence diagram, the coordinates $\beta$ and $\delta$ correspond to $y$-values of critical points of $f$ by Proposition~\ref{first}, except for $\delta=\infty$.
Hence it is possible to determine which critical points of $f$ are associated with each landscape level~$\lambda_k$. This is done under the assumption that the function $f$ is known, so our task is to find out which subset of its critical points correspond to~$\lambda_k$ for a given value of~$k$.

Our algorithm to do this is
the following.
Denote by $(t,u)$ the coordinates of landscape points. A point $(t,0)$ is called a \emph{take-off point} for a nonzero landscape function $\lambda_k$ if there is an $\varepsilon>0$ such that $\lambda_k(s)=0$ for $t-\varepsilon< s\le t$ and $\lambda_k(s)\ne 0$ for $t<s<t+\varepsilon$. Similarly, $(t,0)$ is a \emph{landing point} for $\lambda_k$ if there is an $\varepsilon>0$ such that $\lambda_k(s)\ne 0$ for $t-\varepsilon<s<t$ and $\lambda_k(s)= 0$ for $t\le s<t+\varepsilon$.

\begin{prop}
\label{flight}
Let $\lambda_k$ be a nonzero landscape level of a continuous function $f\colon [a,b]\to\mathbb{R}$. Let $\{t_i\}$, $i\ge 0$, be the ordered set of $t$-coordinates of vertices in the graph of $\lambda_k$, where $t_i<t_{i+1}$ for all~$i$. Then the sequence defined by $y_i=t_i$ if $(t_i,0)$ is a take-off point and $y_i=2\big(t_i-\frac12 y_{i-1}\big)$ otherwise is a sequence of $y$-values of critical points of~$f$.
\end{prop}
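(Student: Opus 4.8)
The plan is to analyze the geometry of a single nonzero landscape $\lambda_k$ locally, triangle by triangle, and to translate each vertex of its graph into a statement about $y$-values of critical points. Recall that $\lambda_k$ is built as the $k$-th largest value among the tent functions $\Lambda_{(\beta_i,\delta_i)}$, each of which is supported on $[\beta_i,\delta_i]$, rises with slope $+1$ from $(\beta_i,0)$ to the apex $\big(\tfrac{\beta_i+\delta_i}{2},\tfrac{\delta_i-\beta_i}{2}\big)$, and falls with slope $-1$ down to $(\delta_i,0)$. Consequently every edge in the graph of $\lambda_k$ has slope $+1$ or $-1$, and at a vertex $(t_i,u_i)$ with $u_i>0$ the function either turns from slope $+1$ to slope $-1$ (an apex of some tent, so $t_i=\tfrac{\beta+\delta}{2}$ for the relevant diagram point) or from slope $-1$ to slope $+1$ (a ``valley'' where one tent overtakes another, forcing the two tents to have equal value there). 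By Proposition~\ref{first}, every such $\beta$ and every finite such $\delta$ is the $y$-coordinate of a critical point of $f$, so it suffices to show the recursion in the statement correctly recovers these $\beta$'s and $\delta$'s from the $t_i$'s.

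First I would set up the induction along the ordered vertex list $\{t_i\}_{i\ge 0}$ of a fixed nonzero $\lambda_k$. The base case is the first vertex $(t_0,0)$: since $\lambda_k$ is nonzero and vanishes to the left, $(t_0,0)$ is a take-off point, $\lambda_k$ leaves it with slope $+1$, and $t_0$ equals the birth value $\beta$ of whichever tent function realizes $\lambda_k$ immediately to the right; hence $y_0=t_0$ is a critical $y$-value, matching the formula. For the inductive step, suppose $y_{i-1}$ has been shown to be a critical $y$-value and equals the relevant $\beta$ or $\delta$ attached to the vertex $(t_{i-1},u_{i-1})$. I would then distinguish cases according to whether $(t_i,0)$ is a take-off point (which can only happen when $\lambda_k$ has returned to $0$, i.e.\ $u_{i-1}=0$ and the previous edge had slope $-1$), in which case again $y_i=t_i=\beta$ directly; or $(t_i,u_i)$ is an interior vertex. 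In the interior case the key computation is: the segment joining $(t_{i-1},u_{i-1})$ to $(t_i,u_i)$ has slope $\pm1$, so $u_i = u_{i-1} \pm (t_i-t_{i-1})$, and one tracks whether $t_i$ is an apex (so $t_i=\tfrac{\beta+\delta}{2}$ and $\delta = 2t_i-\beta = 2t_i - y_{i-1}$ when the incoming edge came up from the birth value $\beta=y_{i-1}$) or a valley. Writing $u_{i-1}$ in terms of $t_{i-1}$ and the birth value, and $u_i$ similarly, the relation $u_i=u_{i-1}\pm(t_i-t_{i-1})$ collapses —after the heights cancel— to $y_i = 2\big(t_i-\tfrac12 y_{i-1}\big)$, which is exactly the ``otherwise'' branch of the formula; and $y_i$ is then recognized as the death value $\delta$ (at an apex) or the birth value $\beta$ of the newly dominant tent (at a valley), hence a critical $y$-value by Proposition~\ref{first}.

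The main obstacle I anticipate is a clean bookkeeping of which diagram point is ``active'' on each edge of $\lambda_k$, and making the case analysis at interior vertices exhaustive: at a valley vertex two tents coincide, and one must argue that the height of the edge arriving at $(t_i,u_i)$ is governed by exactly one birth value (the one inherited through $y_{i-1}$) so that the algebraic cancellation goes through; degenerate coincidences (several tents equal at a vertex, or a vertex where $\lambda_k$ touches $0$ without being a genuine take-off/landing in the $\varepsilon$-sense) need to be ruled out or absorbed. A secondary subtlety is the point $(\beta,\infty)$ present in every diagram: its tent function is the ray of slope $+1$ starting at $(\beta,0)$ with no descending branch, so it contributes only a birth value and never an apex; I would note that this only affects the top landscape and does not disrupt the recursion since such an edge, if it appears in $\lambda_k$, only ever plays the role handled by the take-off/birth branch. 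Once these cases are organized, each step is a one-line computation of the form $u_i = u_{i-1}\pm(t_i-t_{i-1})$ combined with $u_{i-1}=t_{i-1}-\tfrac12 y_{i-1}$ or $u_{i-1}=\tfrac12 y_{i-1}-t_{i-1}$, and the formula in the statement drops out.
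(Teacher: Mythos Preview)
Your proposal is correct and follows essentially the same route as the paper: identify each interior vertex of $\lambda_k$ as the intersection of an ascending edge from some birth $\beta_p$ and a descending edge from some death $\delta_q$, so that $t_i=\tfrac12(\beta_p+\delta_q)$, and then check that the recursion $y_i=2t_i-y_{i-1}$ alternately returns $\delta_q$ (at local maxima of $\lambda_k$) and $\beta_p$ (at local minima), with take-off points giving $\beta$ directly and landing points giving a repeated value. The paper's proof is simply the terse version of your case analysis; your explicit induction and the auxiliary slope relation $u_i=u_{i-1}\pm(t_i-t_{i-1})$ are sound but unnecessary once you have the vertex formula $t_i=\tfrac12(\beta_p+\delta_q)$, which you already invoke.
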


\begin{proof}
If $(t_i,0)$ is a take-off point of $\lambda_k$ (including the case $i=0$) then $t_i$ is the birth coordinate $\beta$ of a point $(\beta,\delta)$ in the persistence diagram of sublevel sets of~$f$, hence a $y$-value of some critical point of~$f$. 
If $(t_i,u_i)$ is a local maximum of $\lambda_k$, then $t_i=\frac12(\beta_p+\delta_q)$ and $u_i=\frac12(\delta_q-\beta_p)$ for points $(\beta_p,\delta_p)$ and $(\beta_q,\delta_q)$ in the persistence diagram (that may be equal or different), and $y_{i-1}=\beta_p$. Hence
\[
\textstyle
y_i=2\big(t_i-\frac12 y_{i-1}\big)=2\big(t_i-\frac12 \beta_p)=\delta_q,
\]
which is a $y$-value of a critical point of~$f$. 
If $(t_i,u_i)$ is a local minimum of $\lambda_k$, then $t_i=\frac12(\beta_p+\delta_q)$ and $u_i=\frac12(\delta_q-\beta_p)$ for some points $(\beta_p,\delta_q)$ and $(\beta_p,\delta_q)$ as well, yet now $y_{i-1}=\delta_q$. In this case,
\[
\textstyle
y_i=2\big(t_i-\frac12 y_{i-1}\big)=2\big(t_i-\frac12 \delta_q)=\beta_p.
\]
If $(\delta,0)$ is a landing point, then $y_{i-1}=\delta$ in the preceding maximum. Therefore $2\big(\delta-\frac12 y_{i-1}\big)=\delta$, which comes repeated and hence landing points can be omitted.
\end{proof}

This method produces a list of $y$-values of critical points of $f$ associated with a subset of selected landscape levels.
Next, the values in this list are compared with the list of all critical points of $f$ in order to obtain the matching $x$-values. 
The precise algorithm is detailed below as Algorithm~\ref{aquest}. If two or more critical points of $f$ coincidentally have the same $y$-value, the algorithm retrieves them simultaneously.

If all nonzero landscape levels $\lambda_k$ are used, then all the critical points of $f$ are recovered.
As shown in the proof of Proposition~\ref{flight}, local minima of $f$ are obtained from take-off points or local minima of landscape functions, while local maxima of $f$ are paired with local maxima of the corresponding landscape functions.
An example of a full reconstruction is shown in Fig.\;\ref{fig:rec_process_2}.
Since the reconstruction is done sequentially by levels,
if only a subset of landscape levels is provided then a partial reconstruction is obtained. 

\begin{figure}[htb]
    \centering
    \begin{subfigure}{\textwidth}
    \centering
    \includegraphics[width=0.8\textwidth]{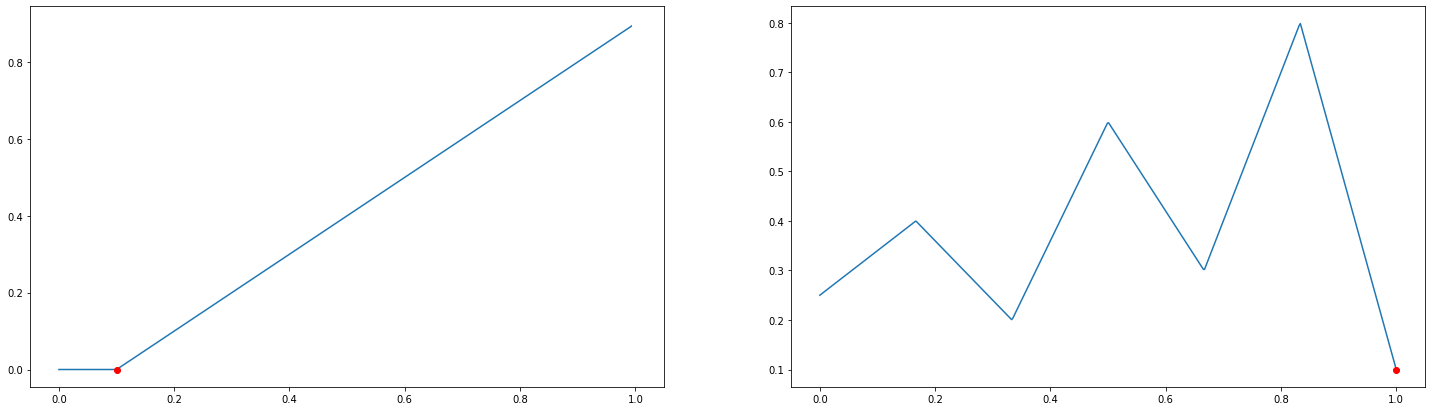}  
    \end{subfigure} \hspace{2em}
    \begin{subfigure}{\textwidth}
    \centering
    \includegraphics[width=0.8\textwidth]{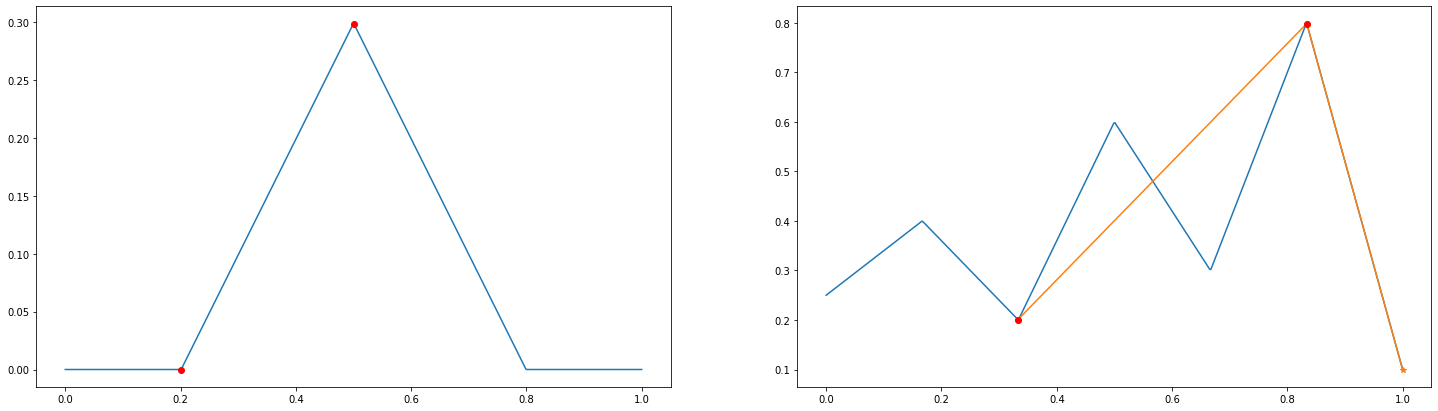}
    \end{subfigure}
    \begin{subfigure}{\textwidth}
    \centering
    \includegraphics[width=0.8\textwidth]{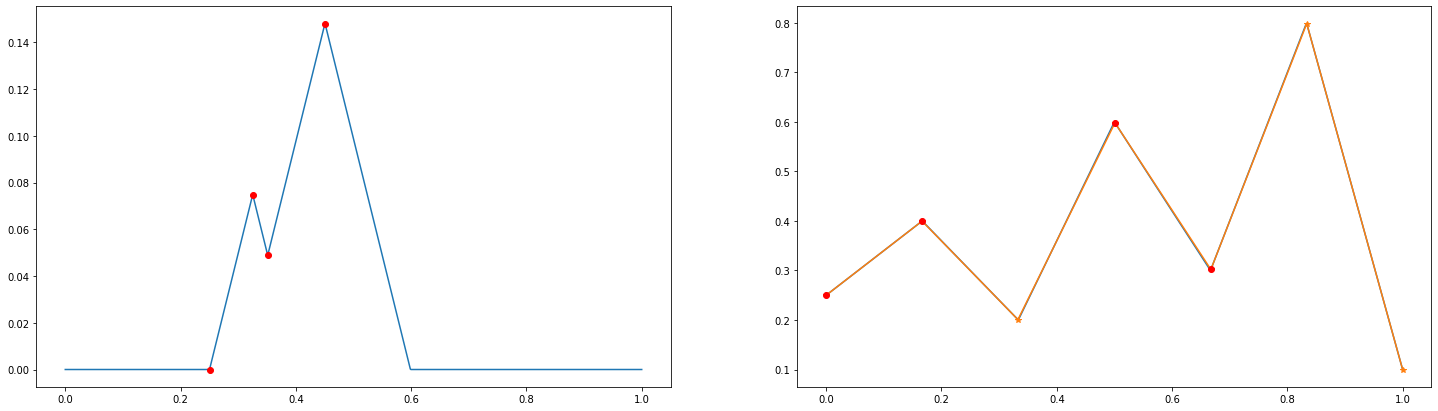}  
    \end{subfigure} \hspace{2em}
    \caption{Each landscape level (left) matches a subset of critical points of a given function (right). The pictures illustrate how the vertices of the successive landscape levels are paired with local extrema of the function (marked in red). Landscape heights have been rescaled.}
    \label{fig:rec_process_2}
\end{figure}

\subsection{An application to importance attribution in neural networks}
\label{section4}

Our work was initially motivated by a practical problem, namely finding out which information from signals is discriminative for classification by a neural network, which is an important study subject in machine learning.

Towards this goal, the aforementioned algorithm was applied to classification of a sample of 87,554 electrocardiogram signals (ECG) taken from the MIT-BIH Arrhythmia Database \cite{physionet,MoodyMark2001}, with the purpose of characterizing the relevant features that define each of five classes of interest: one corresponding to normal beats, three classes corresponding to different arrhythmia types, and one class for unidentifiable heartbeats. 
The database is accessible from~\cite{kaggle}.
A neural network consisting of three convolutional layers combined with max pooling layers followed by two dense layers was designed for the classification task. A custom gating layer was placed before the dense layer as described in~\cite{ACO2}.

Zero-homology of sublevel sets in the vertical direction was computed in order to pre-process each ECG sample and obtain its persistence landscapes. The resulting sequences of functions $\lambda_k$ were fed into a neural network for classification, and the custom gating layer was used to assign a weight to each landscape level in every sequence. Those landscape levels with the largest weights were considered as the ones conveying the most relevant information for the classification task. 
Next, Algorithm~\ref{aquest} was used to approximate the shape that contributed the most in characterizing each of the classes. 
This approximation only leverages information from the selected landscape levels. As a result, the data signals were partially reconstructed using only a subset of their persistence landscapes.

Table~\ref{accuracy} shows average accuracy (percentage of correctly classified samples) using a 5-fold cross-validation. The table compares the classification accuracy of a neural network fed with the original unprocessed signal with the same architecture using a set of ten levels of landscape decomposition, and the accuracy when using only the three most important landscape levels. The results show that the decomposition of the signals using persistence landscapes usually produces a small loss in classification accuracy. This was expected, as the decomposition process loses information in terms of complete signal reconstruction. Despite this fact, the results were close to each other ---average accuracy of 98.41\% with the original data functions versus 94.55\% for the set of the first 10 landscape levels and~94.00\% using only the most meaningful 3 landscape levels.

The simplified versions of data signals were subsequently fed into the same neural network
in order to find out if the data features emphasized by our reconstruction method were sufficient for the
network’s classifications task. The results can also be seen in Table~\ref{accuracy} and demonstrate that, indeed, the accuracy obtained with the simplified functions was comparable to that of the original data.

\begin{table}[htb]
\centering
\begin{tabular}{lcccc}
& \textbf{Raw data} & \;\textbf{10 levels} & \textbf{3 levels} & \textbf{Reconstructed} \\ \hline \\[-0.25cm]
\textbf{Accuracy} & $98.41 \pm 0.09$ & \;$94.55 \pm 0.16$     & $94.00 \pm 0.13$ & $97.04 \pm 0.14$ \\[0.1cm] \hline
\end{tabular}
\caption{Classification accuracy of a neural network fed with raw data versus processed data using ten landscape levels and the most significant three levels, as well as data reconstructed from three landscape levels.}
\label{accuracy}
\end{table}

The outcome of persistent homology adds considerable value towards a more comprehensive attribution along the classification problem. A ranking of landscape importance for the network can be seen in Fig.\;\ref{landscape_rank}. Observe that the first three landscape levels gather most of the attribution power. 

\begin{figure}[htb]
    \centering
    \includegraphics[width=0.5 \textwidth]{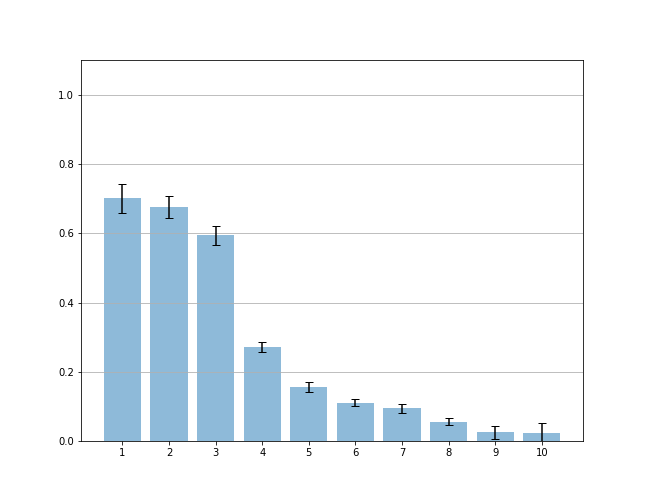}
    \caption{Rank of the importance of each landscape level based on results of a neural network. 
    Higher bars represent greater importance. Confidence intervals correspond to five repetitions of each classification.}
    \label{landscape_rank}
\end{figure}

\section{Discussion}

This article is a contribution to the general inverse problem of reconstructing data from persistence diagrams \cite{beltonetal,Fasyetal2019,Solomonetal2018}. 
While previous works focused mainly on simplicial complexes (in particular, on planar graphs), our method applies to both piecewise linear functions and smooth functions, albeit with distinct algorithms.
We implemented our techniques and tested it with examples from various sources, namely piecewise linear functions, smooth functions obtained using a harmonic function generator, and spline interpolations of third order.
We also used our method with data from a biomedical study \cite{MoodyMark2001} involving electrocardiographic signals, which had been classified into five classes. Our motivation was to address a case of the attribution problem in neural network classifiers.

Our algorithm in the piecewise linear case is a deterministic implementation, with only mild assumptions on the data functions, of the general procedure of using three directional persistence diagrams to reconstruct a planar graph. However, in the application to heartbeat registers, we  used a simplified form of the algorithm due to the fact that 
the neural network was fed with sequences of landscape levels, each of which derived from a sole persistence diagram.
Since the data functions remained available during the classification process, one persistence diagram was sufficient to find the relevant critical points among the set of all vertices of the given graphs. 
Our method allowed us to simplify the given graphs by substantially reducing the number of critical points while keeping essential information about what the neural network was focusing on. 
After the classification had been performed, we carried out a reduction of the number of critical points based on what the network regarded as important, thus highlighting the parts of the original function that were crucial. 

In the case of smooth functions,
five directional persistence diagrams are generally needed for reconstruction, except possibly in instances where the algorithm encounters a difficulty to locate some critical point, which can be solved by means of additional tangent lines. We stated a general result (Theorem~\ref{SmoothTheorem}) with assumptions guaranteeing the convergence of the algorithm, hence yielding an approximation of a given smooth function with a piecewise linear function with the same set of critical points, using persistence diagrams of sublevel sets.

In the smooth case, our algorithm can fail to detect some critical points, or instead converge to false positives if the assumptions of the reconstruction theorem are not fulfilled. In practice a trade-off is required when choosing the $\tau$ value in the algorithm in order to minimize the chances of errors of both kinds. 
Functions with a large number of critical points per length unit may require more careful choices of~$\tau$.

Also critical points where the second derivative vanishes or is very close to zero may cause detection difficulties. The nonvanishing assumption on $f''$ aims to prevent the existence of critical points that may be missed by the algorithm.
We excluded graphs with horizontal segments, although our algorithms could easily be modified in order to treat such cases as well. In the piecewise linear case, the~algorithm detects either the beginning or the end of each horizontal segment, and in order to detect the other end it would suffice to repeat the process using critical lines with reverse slope. In the smooth case, the algorithm still works in the case of sufficiently short horizontal segments (for which it finds the midpoint), although there is an increased risk of not detecting them. Indeed, detection of critical points with $f''$ close to $0$ requires larger values of~$\tau$.

False occurrences of critical points may be caused by the existence of multiple tangent lines or \emph{quasi-multiple} tangent lines, that is, lines that are tangent at a critical point and very close to being tangent at another critical point.
In order to prevent erroneous detection of nonexistent critical points, we perform a global checking, namely that critical points alternate between local maxima and local minima. This allows us to eliminate false positives in some~cases. 

There are two ways in which the numerical precision of the code can affect the result of our algorithm. On one hand, graphs are stored as vectors, and if the step is too large then flat segments may be created at critical points. On the other hand, if the threshold $\tau$ to detect a critical point is small compared with the code precision, then we could lose critical points.

If a given function $f$ is defined on an interval $[a,b]$, then the points $(a,f(a))$ and $(b,f(b))$ can only be detected if they are local minima. Otherwise the sublevel set does not change its topology when passing through them. To deal with this issue, the location of the initial point and the final point of the graph of $f$ should be given as input to the reconstruction algorithm.
A similar strategy could be used to deal with graphs starting and ending with horizontal segments, which is sometimes the case in practice.

While it was feasible to optimize our algorithm in the piecewise linear case, reducing the complexity in the smooth case would require a new design of the algorithm. In its current form, the detection step and the approximation step are successive. Detection of critical points is made in $O(n^3)$ time, where $n$ is the number of critical points, and further calculations are needed pointwise in order to determine each precise location. A more efficient algorithm could be produced by mixing the two steps into a simultaneous searching procedure.

This study illustrates another instance of the usefulness of persistence landscapes as descriptors from homological persistence diagrams. Thanks to the fact that landscapes are made of a hierarchical sequence of levels, it is possible to analyze the way in which information from data is distributed along the sequence of levels. As a consequence, landscapes are valuable as a tool for interpretability in classification processes.

Reconstruction of data is also related with privacy in data science~\cite{BDFKR2018}. In our case, knowledge of a persistence diagram of sublevel sets of a function does not permit to fully reconstruct the function, since the $x$-values of critical points cannot be recovered from a persistence diagram. However, as shown in this article, an oracle yielding enough directional persistence diagrams conveys as much information as the original functions. Towards applications, removing critical points that are not essential for classification purposes is a way to erase information stored in the data while preserving usability. Consequently, selective critical point removal could be a convenient method to diminish the chances of deanonymization when the data functions encode confidential content.

\printbibliography[heading=bibintoc]

{\begin{algorithm}[H]
\SetKwData{crits}{crits}

\SetKwInOut{Input}{Input}
\SetKwInOut{Output}{output}

\KwIn{$\theta_0, \theta_1$ angles, $t_0, t_1$ heights of the lines with angles $\theta_0, \theta_1$\;}
\KwOut{A point where the two given lines cross\;}
\BlankLine
\KwRet{$\left((t_1\sin\theta_0 -t_0\sin\theta_1)/\sin(\theta_0-\theta_1),\; (t_0\cos\theta_1-t_1\cos\theta_0)/\sin(\theta_0-\theta_1)\right)$}
\caption{Intersect function}
\end{algorithm}}

\vspace*{1cm}

{\begin{algorithm}[H]
\SetKwData{crits}{crits}
\SetKwData{T}{T}
\SetKwData{R}{R}
\SetKwData{S}{S}
\SetKwData{start}{start}
\SetKwData{fin}{end}
\SetKwData{True}{True}
\SetKwData{False}{False}
\SetKwInOut{Input}{Input}
\SetKwInOut{Output}{output}

\KwIn{\T, \S, \R persistence diagrams, $\theta_0$, $\theta_1$, $\theta_2$ directions, \start starting point and \fin last point of the function to be reconstructed\;}
\KwOut{List of all critical points of the given function\;}
\BlankLine
$\crits \leftarrow \crits \cup \{\start, \fin\}$\;
\For{$t \in T$}
    {\For{$r \in R$}
        {\For{$s \in S$}
        {
        $p_{tr} \leftarrow $\texttt{intersect}$(\theta_0, \theta_1, t, r)$\;
        $p_{ts} \leftarrow $\texttt{intersect}$(\theta_0, \theta_2, t, s)$\;
        \If{$|| p_{tr} - p_{ts} || < 10^{-6}$}{
            $\crits \leftarrow \crits \cup \{p_{tr}\}$\;
          }
        }
    }
}
\KwRet{\crits}
\caption{Naive reconstruction algorithm}
\label{algorithm:naive}
\end{algorithm}}

\vspace*{1cm}

{\begin{algorithm}[H]
\SetKwData{crits}{crits}
\SetKwData{T}{T}
\SetKwData{R}{R}
\SetKwData{S}{S}
\SetKwData{start}{start}
\SetKwData{fin}{end}
\SetKwData{True}{True}
\SetKwData{False}{False}
\SetKwInOut{Input}{Input}
\SetKwInOut{Output}{output}

\KwIn{\T, \S, \R persistence diagrams, $\theta_0$, $\theta_1$, $\theta_2$ directions, \start starting point and \fin last point of the function to be reconstructed\;}
\KwOut{List of all critical points of the given function\;}
\BlankLine
$\crits \leftarrow \crits \cup \{\start, \fin\}$\;
\texttt{sort}(\T, decreasing)\;
\texttt{sort}(\S, increasing)\;
\For{$s \in S$}
    {$i = 0, j = 0$\;
    foundTriple = \False\;
     \While{\text{!foundTriple} and $i < |R|$ and $j < |T|$}{
         $p_r \leftarrow $\texttt{intersect}$(\theta_1, \theta_2, s, R_i)$\;
         $p_t \leftarrow $\texttt{intersect}$(\theta_1, \theta_0, s, T_j)$\;
         \uIf{$|| p_r - p_t || < 10^{-6}$}{
            foundTriple = \True\;
            $\crits \leftarrow \crits \cup \{p_r\}$\;
          }
          \uElseIf{$x(p_r) < x(p_t)$}{
            $i = i+1$\;
          }
          \Else{
            $j = j+1$\;
          }
     }
}
\KwRet{\crits}
\caption{Rolling ball algorithm for piecewise linear functions}
\label{algorithm:optimized}
\end{algorithm}}

\newpage

{\begin{algorithm}[H]
\SetKwData{crits}{crits}

\SetKwInOut{Input}{Input}
\SetKwInOut{Output}{output}

\KwIn{$x_1$, $x_2$, $x_3$, $x_4$ $x$-coordinates near a critical point on a horizontal line,\newline $m_1$, $m_2$ slopes of critical lines\;}
\KwOut{An approximation to the $x$-coordinate of the critical point\;}
\BlankLine
\KwRet{$[m_2(x_1+x_2)(x_3-x_4)-m_1(x_1-x_2)(x_3+x_4)]/[2m_2(x_3-x_4)-2m_1(x_1-x_2)]$}
\caption{Compute the $x$-coordinate of a critical point}
\end{algorithm}}

\vspace*{1cm}

{\begin{algorithm}[H]
\SetKwData{crits}{crits}
\SetKwData{T}{T}
\SetKwData{R}{R}
\SetKwData{S}{S}
\SetKwData{R2}{R_2}
\SetKwData{S2}{S_2}
\SetKwData{triangles}{triangles}
\SetKwData{passtriangles}{pass\_triangles}
\SetKwData{used}{used}

\SetKwData{original}{original}

\SetKwInOut{Input}{Input}
\SetKwInOut{Output}{output}

\KwIn{$\T, \S, \R$ persistence diagrams corresponding to $\pi/2$, $\theta_1$, $-\theta_1$ respectively, \newline $\S2$, $\R2$ persistence diagrams corresponding to $\theta_2$,~$-\theta_2$ respectively, \newline $\tau$ detection parameter\;}
\KwOut{List of all critical points of the given function\;}
\BlankLine
$\crits \leftarrow \{ \}$\;
$\triangles \leftarrow  \{ \}$\;

\For{$t \in T$}
    {\For{$r \in R$}
        {\For{$s \in S$}
        {
        $p_{tr} \leftarrow $\texttt{intersect}$(\pi/2, -\theta_1, t, r)$\;
        $p_{ts} \leftarrow $\texttt{intersect}$(\pi/2, \theta_1, t, s)$\;
        \If{$| (p_{tr})_x - (p_{ts})_x | < \tau $}{
            $\triangles \leftarrow \triangles \cup \{(t, s, r)\}$\;
          }
        }
    }
}
$\passtriangles \leftarrow \{ \}$\;
$\used \leftarrow \{ \}$\;
\For{$(t, r, s) \in \triangles$}{
    \For{$ss \in \S2$}{
        \For{$rr \in \R2$}{
            \uIf{$(rr, ss)\text{ between }(t, r, s) \text{ and } (rr, ss) \not\in \used$}{
                $\passtriangles \leftarrow \passtriangles \cup \{(t, r, s, rr, ss)\}$\;
                $\used \leftarrow \used \cup \{(rr, ss)\}$\;
            }
        }
    }
}

\For{$(t, r, s, rr, ss) \in \passtriangles$}{
    $x_1 \leftarrow $\texttt{intersect}$(\pi/2, -\theta_1, t, r)$\;
    $x_2 \leftarrow $\texttt{intersect}$(\pi/2, \theta_1, t, s)$\;
    $x_3 \leftarrow $\texttt{intersect}$(\pi/2, -\theta_2, t, rr)$\;
    $x_4 \leftarrow $\texttt{intersect}$(\pi/2, \theta_2, t, ss)$\;
    $x \leftarrow $\texttt{compute\_x}$((x_1)_x, (x_2)_x, (x_3)_x, (x_4)_x, 1/\tan\theta_1, 1/\tan\theta_2)$\;
    $y \leftarrow (x_1)_y$\;
    $\crits \leftarrow \crits \cup \{(x, y)\}$\;
}
$\crits \leftarrow $\texttt{sort}$(\crits)$\;
\KwRet{$\crits$}
\caption{Five-line algorithm for smooth functions}
\end{algorithm}}

\newpage

{\begin{algorithm}[H]
\SetKwData{crits}{crits}
\SetKwInOut{Input}{Input}
\SetKwInOut{Output}{output}

\KwIn{$l$ vector of landscape data, $t$ vector of $t$-coordinates of landscape data\;}
\KwOut{List of $y$-values of critical points\;}
\BlankLine
$\crits \leftarrow \{ \}$\;
\For{$i\gets0$ \KwTo $|l|$}{
    \uIf{$l_i$ is a take-off vertex}{
         $\crits \leftarrow \crits \cup \{t_i\}$\;
    }\uElseIf{$l_i$ is a local minimum or a local maximum}{
        $\crits \leftarrow \crits \cup \{t_i\}$\;
        $\crits_i \leftarrow 2(\crits_i - 0.5\,\crits_{i-1})$\;}
    }
\KwRet{$\crits$}
\caption{Get $y$-values of critical points}
\end{algorithm}
}

\vspace*{1cm}

{\begin{algorithm}[H]
\SetKwData{crits}{crits}
\SetKwData{xcrits}{xcrits}
\SetKwData{candcrits}{candcrits}

\SetKwInOut{Input}{Input}
\SetKwInOut{Output}{output}

\KwIn{$\crits$ list of $y$-values of critical points, $original$ vector of original data, $x$ vector of $x$-coordinates of original data\;}
\KwOut{List of $x$-values of critical points\;}
\BlankLine
$\candcrits \leftarrow \{ \}$\;
\For{$crit \in \crits$}{
    $compare \leftarrow (original - crit)^2$\;
    \For{$i\gets0$ \KwTo $|compare|$}{
        \uIf{$compare_i < 10^{-4}$}{
        $\candcrits \leftarrow \candcrits \cup \{i\}$\;
        }
    }
}

$\xcrits \leftarrow \{ \}$\;
\For{$i \in \candcrits$}{
    \uIf{$original_i$ is a local minimum or a local maximum}{
        $\xcrits \leftarrow \xcrits \cup \{x_i\}$\;
        }
}
\KwRet{$\xcrits$}
\caption{Get $x$-values of critical points}
\end{algorithm}
}

\vspace*{1cm}

{\begin{algorithm}[H]
\SetKwData{crits}{crits}
\SetKwData{landscapes}{landscapes}
\SetKwData{original}{original}

\SetKwInOut{Input}{Input}
\SetKwInOut{Output}{output}

\KwIn{$\landscapes$ a list of landscape levels, $original$ vector of original data\;}
\KwOut{List of selected critical points of the given function\;}
\BlankLine
$\crits \leftarrow \{ \}$\;
\For{$land \in \landscapes$}{
    $ candcrits \leftarrow \{ $\texttt{get\_y\_values}$(land) \}$\;
    $\crits \leftarrow \crits \cup \{$\texttt{get\_x\_values}$(candcrits, original) \} $\;
    }
$\crits \leftarrow $\texttt{sort}$(\crits)$\;
\KwRet{$\crits$}
\caption{Subset of critical points of a function associated with a set of landscape levels}
\label{aquest}
\end{algorithm}
}

\end{document}